\title{Non-unitarisable representations and maximal symmetry}
\author{Valentin Ferenczi}
\address{Instituto de Matem\'atica e Estat\'istica \\
 Universidade de S\~ao Paulo \\
rua do Mat\~ao 1010 \\
Cidade Universit\'aria \\
05508-90 S\~ao Paulo, SP \\
Brazil   \\ and \newline
Equipe d'Analyse Fonctionnelle \\
Institut de Math\'ematiques de Jussieu \\
Universit\'e Pierre et Marie Curie - Paris 6 \\
Case 247, 4 place Jussieu \\
75252 Paris Cedex 05 \\
France.}
\email{ferenczi@ime.usp.br}
\author {Christian Rosendal}
\address{Department of Mathematics, Statistics, and Computer Science (M/C 249)\\
University of Illinois at Chicago\\
851 S. Morgan St.\\
Chicago, IL 60607-7045\\
USA}
\email{rosendal.math@gmail.com}
\urladdr{http://homepages.math.uic.edu/$~$rosendal}
\thanks{Ferenczi was supported by FAPESP, grant 2013/11390-4. Rosendal was partially supported by a Simons Foundation Fellowship (Grant
\#229959) and also recognises support from the NSF (DMS 1201295).}
\date {}
\newcommand{\norm}[1]{\lVert#1\rVert}
\newcommand{\Norm}[1]{\big\lVert#1\big\rVert}
\newcommand{\triple}[1]{|\!|\!|#1|\!|\!|}
\newcommand {\1}{\mathbf 1}
\newcommand {\F}{\mathbb F}
\newcommand {\Z}{\mathbb Z}
\newcommand {\C}{\mathbb C}
\newcommand{\eps}{\epsilon}
\newcommand{\tom} {\emptyset}
\newcommand{\equi}{\Leftrightarrow}
\newcommand{\til}{\rightarrow}
\newcommand{\Lim}[1]{\mathop{\longrightarrow}\limits_{#1}}
\newcommand {\del}{ \; \big| \;}
\newcommand {\ku} {\mathcal}
\newcommand{\inv}{^{-1}}
\newcommand{\Id}{{\rm Id}}
\newcommand {\e} {\exists}
\renewcommand {\a} {\forall}
\newtheorem{thm}{Theorem}
\newtheorem{cor}[thm]{Corollary}
\newtheorem{lemma}[thm]{Lemma}
\newtheorem{prop} [thm] {Proposition}
\newtheorem{prob}[thm]{Problem}
\theoremstyle{definition}
\newtheorem{exa}[thm]{Example}
\begin{document}

\begin{abstract} 
We investigate questions of maximal symmetry in Banach spaces and the structure of certain bounded  non-unitarisable groups on Hilbert space. In particular, we provide structural information about bounded groups with an essentially unique invariant complemented subspace. This is subsequently combined with rigidity results for the unitary representation of ${\rm Aut}(T)$ on $\ell_2(T)$, where $T$ is the countably infinite regular tree, to describe the possible bounded subgroups of ${\rm GL}(\ku H)$ extending a well-known non-unitarisable representation of $\mathbb F_\infty$.

As a related result, we also show that a transitive norm on a separable Banach space must be strictly convex.  
\end{abstract}

\maketitle
\section{introduction}
The research of the present paper aims to expand on a circle of ideas involving maximal symmetry in Banach spaces and non-unitarisable representations in Hilbert space. Let us recall that a subgroup $G$ of the general linear group ${\rm GL}(X)$ of all continuous linear automorphisms of a Banach space $X$ is said to be {\em bounded} if $G$ is a uniformly bounded family of operators, i.e., $\sup_{T\in G}\norm T<\infty$. In this case, $X$ admits an equivalent $G$-invariant norm, namely, $\triple x=\sup_{T\in G}\norm {Tx}$. Thus, boundedness simply means that $G$ is a group of isometries for some equivalent norm on $X$. Also, $G\leqslant {\rm GL}(X)$ is {\em maximal bounded} if it is not properly contained in another bounded subgroup of ${\rm GL}(X)$. Maximal bounded groups naturally correspond to maximally symmetric norms on $X$, in the sense that, if the isometry group of a specific norm is maximal bounded, in which case we say the norm is {\em maximal}, then there is no manner of renorming $X$ to obtain a strictly larger set of isometries.  

When $X$ is finite-dimensional, every bounded $G\leqslant {\rm GL}(X)$ is contained in a maximal bounded subgroup, namely, the unitary group of a $G$-invariant inner product on $X$. This may be seen as an analogue of  the Cartan--Iwasawa--Malcev theorem, i.e.,  the existence of maximal compact subgroups in connected Lie groups. Also, in many of the classical spaces such as $\ell_p$ or Hilbert space, the canonical norm is maximal \cite{pelczynski, rolewicz}. However, not every Banach space admits an equivalent maximal norm, indeed, counter-examples may be found among super-reflexive spaces \cite{duke}. Furthermore, as shown by S. J. Dilworth and B. Randrianantoanina \cite{beata}, even among classical spaces such as $\ell_p$, $1<p<\infty$, $p\neq 2$, the general linear group contains bounded subgroups not contained in a maximal bounded subgroup. The following problem, which is the main motivation for our study, also remains stubbornly open.

\begin{prob}\label{prob}
Is every maximal norm on a Hilbert space $\ku H$ euclidean, i.e., generated by an inner product?
\end{prob}

This problem is tightly related to two other issues in functional analysis, namely, the existence of non-unitarisable bounded representations and S. Mazur's rotation problem. Here a bounded representation $\lambda\colon \Gamma\til {\rm GL}(\ku H)$ of a group $\Gamma$ on a complex Hilbert space $\ku H$ is said to be {\em unitarisable} if there is an equivalent $\lambda(\Gamma)$-invariant inner product on $\ku H$, or, equivalently, if $\lambda$ is conjugate to a unitary representation on $\ku H$. As was shown by  M. Day \cite{day} and J. Dixmier \cite{dixmier}, strongly continuous bounded representations of amenable groups are always unitarisable. On the other hand, L. Ehrenpreis and F. I. Mautner \cite{mautner} constructed the first example of a non-unitarisable bounded representation of a countable group $\Gamma$. In this connection, Dixmier posed the still central problem of whether unitarisability of all bounded representations characterises amenable groups among countable discrete groups. Now, by the Ehrenpreis--Mautner example, there are bounded subgroups $G\leqslant {\rm GL}(\ku H)$ of complex separable Hilbert space not preserving any euclidean norm, but it remains an open question whether there are such $G$ which are maximal or even if every such $G$ is contained in a maximal bounded subgroup.

Note that while the isometry group of a maximal norm on a space $X$ is maximal bounded by definition, there may be several essentially distinct norms, i.e., not scalar multiplies of each other, with this same isometry group. One case where the norm is uniquely defined by its isometry group is when the latter acts transitively on every sphere, in which case, the norm is said to be {\em transitive}. This happens, for example, for Hilbert space $\ku H$ and ultrapowers of $L^p$ spaces. However, in the separable setting, it is not known whether Hilbert space is the only such example either isomorphically or isometrically. This is known as {\rm Mazur's rotation problem} \cite{Banach,mazur}. 

Though much information has been obtained on {\em almost transitive} Banach spaces, i.e., whose isometry group has dense orbits on spheres, under additional geometric assumptions such as reflexivity \cite{CS, becerra}, we are not aware of any results that necessitates actual transitivity. Our first result shows that such spaces are at least strictly convex.

\begin{thm}
Let $(X,\norm\cdot)$ be a separable real transitive Banach space. Then $X$ is strictly convex and $\norm\cdot$ is G\^ateaux differentiable.
\end{thm}

Let us remark that this result fails if $X$ is only assumed to be almost transitive, as can be seen by considering $L^1([0,1])$.

We then turn our attention to issues related to Problem \ref{prob}. In particular, we shall be considering the structure of bounded groups containing the image of one specific widely studied non-unitarisable representation associated to actions on trees (see, e.g., \cite{ ozawa, pisier, pytlic}). For this, suppose that $\lambda\colon \Gamma\til \ku U(\ku H)$  is a unitary presentation. A {\em bounded derivation} associated to $\lambda$ is a uniformly bounded map $d\colon \Gamma\til \ku B(\ku H)$ so that $d(gf)=\lambda(g)d(f)+d(g)\lambda(f)$ for all $g,f\in \Gamma$. This is simply equivalent to requiring that
$$
\lambda_d(g)=\begin{pmatrix} \lambda(g) & d(g) \\ 0 & \lambda(g) \end{pmatrix}
$$
defines a bounded represention of $\Gamma$ on $\ku H\oplus \ku H$. It is well-known that the representation $\lambda_d$ is unitarisable exactly when $d$ is {\em inner}, i.e., $d(g)=\lambda(g)A-A\lambda(g)$ for some bounded linear operator $A$ on $\ku H$.

With the aim of elucidating bounded groups $G\leqslant {\rm GL}(\ku H\oplus \ku H)$ containing $\lambda_d[\Gamma]$ for $\lambda$ and $d$ as above, which are potential examples of maximal non-unitarisable groups, we first prove a result valid in a much broader setting relating the diagonal entries in a bounded group of upper triangular block matrices. Indeed, suppose that $X=Y\oplus Z$ is a separable reflexive Banach space and $G\leqslant {\rm GL}(Y\oplus Z)$ is a bounded group of upper triangular block matrices 
$$
\begin{pmatrix} u & w \\ 0 & v \end{pmatrix}.
$$
We first observe that, in this case, $w$ is actually a function $w$ of $u,v$. However, under stronger assumptions, we show that  the diagonal entries $u$ and $v$ are also in a one-to-one correspondence and so every element of $G$ is uniquely determined by just the entry $u$ and similarly by $v$.

\begin{thm}
Let $X=Y\oplus Z$ be separable reflexive and $G\leqslant {\rm GL}(X)$ a bounded subgroup leaving $Y$ invariant. Assume that there are no closed linear $G$-invariant subspaces $\{0\}\varsubsetneq W\varsubsetneq Y$ nor superspaces $Y\varsubsetneq W\varsubsetneq X$ and there is no closed linear $G$-invariant complement of $Y$ in $X$.
Then the mappings
$$
\begin{pmatrix} u & w \\ 0 & v \end{pmatrix}\mapsto u
\quad\text{and}\quad
\begin{pmatrix} u & w \\ 0 & v \end{pmatrix}\mapsto v
$$
are $\mathtt{sot}$-isomorphisms between $G$ and the respective images in ${\rm GL}(Y)$ and ${\rm GL}(Z)$.
\end{thm}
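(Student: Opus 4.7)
The plan is to focus on injectivity of the map $g \mapsto u(g)$ and set $N := \{g \in G : u(g) = I\}$; the $v$-map argument is symmetric, and $\mathtt{sot}$-continuity of the inverses follows from a standard Pettis-style open mapping argument for Polish groups under $\mathtt{sot}$. The preceding observation that $w$ is a function of $u,v$ gives $N \cap \ker(v) = \{I\}$, so the $v$-map embeds $N$ as a subgroup $V \leq \mathrm{GL}(Z)$, normal in $v(G)$, and the block multiplication formula yields the cocycle identity $w(v_1v_2) = w(v_1)v_2 + w(v_2)$.

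Next, I would derive dichotomies for the action of $V$ on $Z$: the closed subspaces $Z^V := \{z \in Z : vz = z\ \forall v \in V\}$ and $Z_V := \overline{\mathrm{span}}\{(v-I)z : v \in V, z \in Z\}$ are each shown, using normality of $V$ in $v(G)$, to give rise via direct sum with $Y$ to closed $G$-invariant superspaces of $Y$ in $X$, which by hypothesis must equal $Y$ or $X$. Since $N \neq \{I\}$ (our working assumption) forces $V \neq \{I\}$, the hypothesis on intermediate superspaces then yields $Z^V = \{0\}$ and $Z_V = Z$.

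The heart of the argument is to construct from $N$ a closed $G$-invariant complement of $Y$, contradicting the remaining hypothesis. I identify bounded linear sections $\sigma : X/Y \to X$ of the quotient with elements of $B(Z,Y)$ by $\sigma_A(z) = (Az,z)$; such $\sigma_A$ is $N$-equivariant precisely when $A(v-I) = w(v)$ for all $v \in V$. This is the fixed-point equation for the affine action $g_v \cdot A = Av^{-1} + w(v)v^{-1}$ of $N$ on $B(Z,Y)$. Since $Y$ is reflexive, $B(Z,Y)$ is a dual Banach space (isomorphic to $(Z \widehat\otimes_\pi Y^*)^*$), whose weak-$*$ topology is the point-weak one and in which bounded sets are weak-$*$ compact; the weak-$*$ closed convex hull of the bounded orbit $\{w(v)v^{-1} : v \in V\}$ is then a weak-$*$ compact $N$-invariant convex set $K$, on which the action is weak-$*$ continuous and norm-noncontracting (since $\|(A-A')v^{-1}\| \geq \|A-A'\|/\sup_V \|v\| > 0$). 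The Namioka--Ryll-Nardzewski fixed point theorem for dual Banach spaces then provides a fixed $A \in K$.

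The resulting $N$-equivariant section is unique, since any two such sections would differ by an element of $B(Z,Y)$ vanishing on $(v-I)Z$ for every $v$, hence on $Z_V = Z$. As $N$ is normal in $G$, the action of $G$ on sections given by $(g_0 \cdot \sigma)(z) = g_0\sigma(v(g_0)^{-1}z)$ preserves $N$-equivariance, and by uniqueness it fixes $\sigma_A$; thus $\sigma_A$ is $G$-equivariant, and $\mathrm{Im}(\sigma_A)$ is a closed $G$-invariant complement of $Y$ in $X$, contradicting the hypothesis. The main technical obstacle is verifying the Namioka--Ryll-Nardzewski hypotheses, in particular recognizing $B(Z,Y)$ as a dual Banach space via reflexivity and establishing norm-noncontractivity of the action from boundedness of $G$.
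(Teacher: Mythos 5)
Your argument for the \emph{algebraic} part of the statement --- injectivity of the two coordinate maps --- is correct and takes a genuinely different route from the paper. The paper constructs a continuous, homogeneous, $G$-equivariant (generally non-linear) lifting $\phi$ of the quotient map via an invariant LUR renorming, nearest-point maps and a Bartle--Graves selector, and then exploits the defect $\Delta(\dot x_1,\dot x_2)=\phi(\dot x_1)+\phi(\dot x_2)-\phi(\dot x_1+\dot x_2)$, whose $G$-orbit spans $Y$ when no invariant complement exists. You instead work with the kernel $N=\ker(g\mapsto u(g))$, set up the cocycle identity for $w$ on $V=v(N)$, and solve the resulting affine fixed-point problem in $B(Z,Y)\cong (Z\widehat{\otimes}_\pi Y^*)^*$ by Ryll--Nardzewski; the dichotomies $Z^V\in\{0,Z\}$ and $Z_V\in\{0,Z\}$ from normality of $V$ in $v(G)$ then give uniqueness, and normality of $N$ in $G$ upgrades the $N$-equivariant section to a $G$-invariant complement. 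I checked the details (the action $g_v\cdot A=Av^{-1}+w(v)v^{-1}$ is indeed an action, its weak-$*$ continuity, norm-distality from boundedness of $G$, invariance of the weak-$*$ closed convex hull of the orbit of $0$, and the symmetric treatment of $\ker(g\mapsto v(g))$ using the no-intermediate-subspace hypothesis inside $Y$); this all works and is arguably closer to the classical ``unitarisable iff inner'' circle of ideas, at the price of invoking a fixed-point theorem where the paper only needs the existence of $\phi$.

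However, there is a genuine gap in the \emph{topological} part of the claim. The theorem asserts that the coordinate maps are isomorphisms of topological groups for $\mathtt{sot}$, i.e., that the inverse maps $u\mapsto g$ and $v\mapsto g$ are $\mathtt{sot}$-continuous, and your one-line appeal to a ``Pettis-style open mapping argument for Polish groups'' does not apply here. The group $G$ is an arbitrary bounded subgroup of ${\rm GL}(X)$; in the intended applications it is something like $\lambda_d[\F_\infty]$, a countable group on which $\mathtt{sot}$ is non-discrete, hence meager in itself, hence not Baire and not Polish --- so Pettis' theorem and the open mapping theorem for Polish groups are unavailable. Passing to $\mathtt{sot}$-closures does not repair this: even for Polish $G$, a continuous injective homomorphism onto a (possibly dense, non-closed) subgroup of a Polish group need not be a homeomorphism onto its image, as $n\mapsto e^{in}$ from $\Z$ into $\T$ shows. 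The continuity of the inverse is a substantive use of the hypotheses: it is exactly here that the paper uses that $\Delta$ is continuous, equivariant and non-zero (by the no-complement hypothesis), so that $\overline{\rm span}\big(G\cdot\Delta(\dot x_1,\dot x_2)\big)=Y$ and $\dot T_n\Lim{\mathtt{sot}}\dot T$ forces $T_n y\to Ty$ on $Y$ and then, via $\phi$, on all of $X$. Your fixed-point construction lives entirely in the contradiction branch ($N\neq\{\Id\}$) and produces nothing in the actual situation that would control approximate identities, so some version of the paper's continuity argument (or another concrete mechanism) still has to be supplied.
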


This result in particular applies when $G$ contains the image $\lambda_d[\Gamma]$, where $\lambda$ is an irreducible unitary representation and $d$ is an associated non-inner derivation, whence the following corollary.

\begin{cor}
Suppose that $\lambda\colon \Gamma\til \ku U(\ku H)$ is an irreducible unitary representation of a group $\Gamma$ on a separable  Hilbert space $\ku H$ and $d\colon \Gamma\til \ku B(\ku H)$ is an associated non-inner bounded derivation.
Suppose that $G \leqslant GL(\ku H \oplus \ku H)$ is a bounded subgroup leaving the first copy of $\ku H$ invariant and containing $\lambda_d[\Gamma]$. Then the mappings $G\til {\rm GL}(\ku H)$ defined by
$$
\begin{pmatrix} u & w \\ 0 & v \end{pmatrix}\mapsto u
\quad\text{and}\quad
\begin{pmatrix} u & w \\ 0 & v \end{pmatrix}\mapsto v
$$
are $\mathtt{sot}$-isomorphisms between $G$ and the respective images in ${\rm GL}(\ku H)$.
\end{cor}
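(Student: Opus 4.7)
The plan is to deduce the corollary directly from the preceding theorem by verifying that each of its three hypotheses is automatically satisfied in the present setting. Taking $Y$ to be the first copy of $\ku H$ and $Z$ the second, the reflexivity and separability of $X=Y\oplus Z$ are immediate, and $G$-invariance of $Y$ is part of the hypothesis. What remains is to rule out (i) nontrivial closed $G$-invariant subspaces of $Y$, (ii) closed $G$-invariant subspaces $W$ with $Y\varsubsetneq W\varsubsetneq X$, and (iii) closed $G$-invariant complements of $Y$ in $X$.

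Conditions (i) and (ii) should both fall out of the irreducibility of $\lambda$ once one observes that $\lambda_d[\Gamma]\subseteq G$ acts on the upper $\ku H$ as $\lambda$. For (i) this is immediate, since any proper nontrivial $G$-invariant subspace of $Y$ would in particular be $\lambda$-invariant. For (ii) I would pass to the quotient: since $Y\subseteq W$, the image $W/Y$ is a closed proper nonzero subspace of $X/Y$, and $X/Y$ carries a natural action of $G$ for which the elements of $\lambda_d[\Gamma]$ act, after identifying $X/Y\iso Z=\ku H$ via the canonical projection, exactly as $\lambda(g)$ (because the lower-right block of $\lambda_d(g)$ is $\lambda(g)$). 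Irreducibility of $\lambda$ then yields the desired contradiction.

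The main step, and essentially the only place where the non-innerness of $d$ is used, is (iii). Any closed complement $Z'$ of $Y$ in $X$ must be the graph $Z'=\{(Az,z)\del z\in Z\}$ of a uniquely determined bounded linear operator $A\colon Z\til Y$. A direct block-matrix computation shows that $\lambda_d(g)$-invariance of $Z'$ is equivalent, after identifying $Y=Z=\ku H$, to the identity
$$
d(g)=A\lambda(g)-\lambda(g)A \qquad (g\in\Gamma).
$$
Since $d$ is non-inner by hypothesis, no such $A$ exists, so $Z'$ cannot even be $\lambda_d[\Gamma]$-invariant, let alone $G$-invariant. The three hypotheses being verified, the preceding theorem applies and delivers the corollary. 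No serious obstacle is anticipated; the content of the argument lies in recognising that the existence of a $\lambda_d[\Gamma]$-invariant complement of $Y$ is exactly the innerness of $d$, which is a well-known reformulation and essentially the reason $\lambda_d$ was introduced.
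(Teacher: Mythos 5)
Your proposal is correct and follows essentially the same route as the paper: both reduce the corollary to the general theorem on upper-triangular bounded groups by checking that irreducibility of $\lambda$ rules out invariant subspaces of $\ku H_1$ and invariant superspaces, and that non-innerness of $d$ rules out an invariant complement. The only cosmetic differences are that you handle the superspace condition by passing to the quotient $X/Y$ (where $\lambda_d(g)$ acts as $\lambda(g)$) rather than intersecting with the second copy of $\ku H$, and you encode a putative invariant complement as the graph of a bounded operator $A$ rather than via the projection onto $\ku H_1$ along it; both yield the same identity $d(g)=A\lambda(g)-\lambda(g)A$ and the same contradiction.
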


We subsequently turn to study one specific derivation. For this, let $T$ denote the $\aleph_0$-regular tree, i.e., the Cayley graph of the free group $\F_\infty$ on denumerably many generators with respect to its free generating set. Let also ${\rm Aut}(T)$ denote its group of automorphisms and $\lambda\colon {\rm Aut}(T)\curvearrowright \C^T$ the canonical shift action on the vector space of $\C$-valued functions on $T$. We observe that each of the subspaces $\ell_p(T)\subseteq \C^T$ are $\lambda$-invariant. While it is fairly easy to see that the unitary representation $\lambda\colon {\rm Aut}(T)\til \ku U(\ell_2(T))$ is irreducible and  {\em uniquely unitarisable}, i.e., up to a scalar multiple preserves a unique inner product equivalent with the usual one, we may show significantly stronger results. Firstly, we show that the usual inner prooduct $\langle\cdot|\cdot\rangle$, up to a scalar multiple, is the only inner product (not necessarily equivalent to $\langle\cdot|\cdot\rangle$) preserved by $\lambda$. Secondly, we have the following result strengthening irreducibility.

\begin{thm}\label{commutant intro}
The commutant of $\lambda\big({\rm Aut}(T)\big)$ in the space of linear operators from $\ell_p(T)$ to $\C^T$, $1<p\leqslant \infty$, is just $\C\!\cdot\! \Id$. 
\end{thm}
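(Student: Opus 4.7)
The plan is to exploit transitivity of ${\rm Aut}(T)$ on ordered pairs of vertices at each fixed distance, together with the $\aleph_0$-regularity of $T$, to force any intertwining operator to have a ``kernel'' supported only on the diagonal.

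Concretely, assume $A\colon \ell_p(T)\to \C^T$ is continuous linear (with $\C^T$ in its product topology) and commutes with $\lambda(g)$ for every $g\in{\rm Aut}(T)$. Set $a_{v,w}=(Ae_v)(w)$. Applying the intertwining relation to $e_v$ and evaluating at $w$ gives $a_{gv,gw}=a_{v,w}$ for all $g$. Since any two ordered pairs of vertices at the same distance can be interchanged by some tree automorphism (swap the relevant subtrees rooted along the geodesic), there is a function $f\colon \N\to \C$ with $a_{v,w}=f(d(v,w))$.

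Now I feed in continuity. For each fixed $w\in T$, the coordinate map $x\mapsto (Ax)(w)$ is a continuous linear functional on $\ell_p(T)$. For $1<p<\infty$ this functional is represented by a vector in $\ell_q(T)$ ($\tfrac 1p+\tfrac 1q=1$) whose $v$-entry equals $f(d(v,w))$, so
\[
\sum_{v\in T} |f(d(v,w))|^q<\infty.
\]
Each sphere $S_n(w)=\{v:d(v,w)=n\}$ is countably infinite for $n\geq 1$ because $T$ is $\aleph_0$-regular, so every non-zero value $f(n)$, $n\geq 1$, would contribute a divergent subseries. Hence $f(n)=0$ for all $n\geq 1$, which means $Ae_v=f(0)e_v$, and continuity together with the density of $c_{00}(T)$ extends this to $A=f(0)\cdot\Id$ on all of $\ell_p(T)$.

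The case $p=\infty$ is where the main obstacle lies, because the coordinate functionals now belong to ${\rm ba}(T)$ rather than $\ell_1(T)$. I would apply the Yosida--Hewitt decomposition $\varphi_w=\varphi_w^{c}+\varphi_w^{s}$ and run the previous argument on the restriction to $c_0(T)$: the purely finitely additive part $\varphi_w^s$ vanishes on $c_0$, while the countably additive part $\varphi_w^c\in\ell_1(T)$ must again equal $f(0)\delta_w$ by the $\aleph_0$-regularity count. To eliminate $\varphi_w^s$, I would use that $\varphi_w$, and hence $\varphi_w^s$, is ${\rm Stab}(w)$-invariant; then on each sphere $S_n(w)$ the partition $S_n(w)=\bigsqcup_{u\in S_1(w)}\bigl(T_u\cap S_n(w)\bigr)$ is permuted arbitrarily by the subgroup of ${\rm Stab}(w)$ permuting the subtrees rooted at the neighbours of $w$, giving infinitely many ${\rm Stab}(w)$-translates of any subset $B\subseteq T_{u_1}\cap S_n(w)$ that are pairwise disjoint. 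Boundedness of a finitely additive measure forces each such $B$ to have measure zero, and a refinement of this paradoxical-style argument should kill $\varphi_w^s$ entirely, yielding $A=f(0)\cdot\Id$ as before.
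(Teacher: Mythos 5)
There is a genuine gap: your argument proves a weaker statement than the one in the paper. The theorem concerns the commutant of $\lambda({\rm Aut}(T))$ among \emph{all} globally defined linear maps $\ell_p(T)\til \C^T$, with no continuity assumed -- the paper explicitly frames this section as ``removing any assumptions of continuity'', and the subsequent application (Theorem~\ref{L* perturbation}, where $A\colon\ell_2(T)\til\C^T$ is an arbitrary globally defined linear operator with $A-L^*$ in the commutant) genuinely needs the discontinuous case. Your proof, by contrast, hinges entirely on continuity: the step ``$x\mapsto (Ax)(w)$ is a continuous functional, hence represented by a vector of $\ell_q(T)$'' (resp.\ lies in ${\rm ba}$ for $p=\infty$) is unavailable for a general linear map, and nothing in the distance-transitivity step $a_{v,w}=f(d(v,w))$ rules out, say, an adjacency-type kernel with $f(1)\neq 0$ once summability of coordinate functionals is lost. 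The paper's mechanism is designed precisely to avoid this: since the stabiliser ${\rm Aut}(T)_t$ acts on $\ell_q(T\setminus\{t\})$, $q<\infty$, without almost invariant vectors (non-amenability), the map $Rx=(x-\lambda(g_1)x,\ldots,x-\lambda(g_k)x)$ is bounded below, so $R^*$ is onto and every $x\in\ell_p(T\setminus\{t\})$ is a \emph{finite algebraic} sum $\sum_i\big(y_i-\lambda(g_i)y_i\big)$; applying $\1_t^*$ and equivariance then kills the $t$-coordinate of $Sx$ by pure algebra, after which transitivity gives $S=\alpha\,\Id$ exactly as in your final step. This is also where the hypothesis $p>1$ enters (conjugate index $q<\infty$), matching the failure at $p=1$ via the neighbour-sum operator.

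Two secondary points. First, within your continuity framework the case $1<p<\infty$ is correct and pleasantly elementary. Second, your $p=\infty$ case is only sketched (``a refinement \ldots should kill $\varphi_w^s$''); it can in fact be completed without Yosida--Hewitt: splitting the branches at $w$ into two infinite families, any $B\subseteq T\setminus\{w\}$ decomposes as $B_1\sqcup B_2$ with each $B_i$ admitting infinitely many pairwise disjoint ${\rm Stab}(w)$-translates, so a bounded invariant finitely additive measure vanishes on all of $\ell_\infty(T\setminus\{w\})$, forcing $\varphi_w=f(0)\1_w^*$. But even so completed, the argument only treats norm-continuous (coordinatewise continuous) operators and therefore does not yield the theorem as stated or as used later in the paper.
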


To construct the derivation, we fix a root $e\in T$ and set $\hat e=e$, while for $s\in T$, $s\neq e$, we let $\hat s$ denote the penultimate vertex on the geodesic in $T$ from $e$ to $s$. Also,  $L\colon \ell_1(T)\til \ell_1(T)$ is the bounded linear operator satisfying $L(\1_s)=\1_{\hat s}$ for $s\neq e$ and $L(\1_e)=0$. Then, if $L^*$ denotes the adjoint operator on $\ell_\infty(T)$, for every $g\in {\rm Aut}(T)$, $d(g)=L^*\lambda(g)-\lambda(g)L^*$ restricts to a linear operator on $\ell_2(T)$ of norm $\leqslant 2$ and agrees with the operator $\lambda(g)L-L\lambda(g)$ on $\ell_1(T)$. It follows that $d$ defines a bounded derivation associated to $\lambda$, which, however, is not inner. Moreover, with the aid of Theorem \ref{commutant intro}, we show that this definition of $d$ is extremely rigid.  In fact, $L^*$ is essentially the only linear map with domain $\ell_2(T)$ defining $d$.

\begin{thm}Let $d$ be the derivation defined above and suppose $A\colon \ell_2(T)\til \C^T$ is a globally defined linear operator so that $d(g)=A\lambda(g)-\lambda(g)A$ for all $g\in {\rm Aut}(T)$. Then  $A=L^*+\vartheta\Id$ for some $\vartheta\in \C$. 
\end{thm}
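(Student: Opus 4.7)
The plan is to reduce the statement directly to Theorem \ref{commutant intro}: since both $A$ and $L^*$ implement the same derivation $d$, their difference must commute with the entire action of ${\rm Aut}(T)$, and the commutant theorem then forces this difference to be a scalar multiple of the identity. The only point to verify first is that $L^*$, a priori defined on $\ell_\infty(T)$, is globally defined as a linear map $\ell_2(T) \to \C^T$. This is immediate from the inclusion $\ell_2(T) \subseteq \ell_\infty(T)$ (valid since $\|x\|_\infty \leqslant \|x\|_2$), so $B := A - L^*$ is a well-defined, though not necessarily bounded, linear operator $\ell_2(T) \to \C^T$.

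With $B$ in hand, the intertwining computation is a single line. Combining the hypothesis $d(g) = A\lambda(g) - \lambda(g)A$ with the definition $d(g) = L^*\lambda(g) - \lambda(g)L^*$ gives
\[
B\lambda(g) - \lambda(g)B = \bigl(A\lambda(g) - \lambda(g)A\bigr) - \bigl(L^*\lambda(g) - \lambda(g)L^*\bigr) = d(g) - d(g) = 0
\]
for every $g \in {\rm Aut}(T)$. Hence $B$ lies in the commutant of $\lambda\bigl({\rm Aut}(T)\bigr)$ inside the space of linear operators from $\ell_2(T)$ to $\C^T$.

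Applying Theorem \ref{commutant intro} with $p = 2$ now yields $B = \vartheta\,\Id$ for some $\vartheta \in \C$, which is exactly the claim $A = L^* + \vartheta\,\Id$. The deduction itself is essentially a formal consequence of the commutant theorem, so no genuine obstacle appears at this stage; the substance of the rigidity assertion has been entirely absorbed into the proof of Theorem \ref{commutant intro}.
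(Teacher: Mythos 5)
Your argument is exactly the paper's proof: subtract $L^*$ from $A$, observe that the difference commutes with $\lambda(g)$ for all $g\in{\rm Aut}(T)$, and apply the trivial-commutant theorem with $p=2$ (which, as you note, requires no boundedness of $A-L^*$). The preliminary remark that $L^*$ makes sense on $\ell_2(T)$ via $\ell_2(T)\subseteq\ell_\infty(T)$ is a correct, if minor, clarification the paper leaves implicit.
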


Finally, we may combine the previous analysis of bounded subgroups with the specific nature of the given derivation to obtain the following structure result.

\begin{thm}
Let $d$ be the derivation defined above and suppose that $G\leqslant {\rm GL}(\ell_2(T)\oplus\ell_2(T))$ is a bounded subgroup leaving  the first copy of $\ell_2(T)$ invariant and containing $\lambda_d[{\rm Aut}(T)]$.
Then there is a continuous homogeneous map $\psi\colon \ell_2(T)\til \ell_2(T)$ for which
$$
L^*+\psi\colon \ell_2(T)\til \ell_\infty(T)\quad \text{and}\quad  L-\psi\colon \ell_1(T)\til \ell_2(T)
$$
commute with $\lambda(g)$ for $g\in {\rm Aut}(T)$ and so that every element of $G$ is of the form
$$
\begin{pmatrix} u & u\psi-\psi v \\ 0 & v \end{pmatrix}
$$
for some $u,v\in {\rm GL}(\ell_2(T))$. 

Finally, the mappings 
$$
\begin{pmatrix} u & u\psi-\psi v \\ 0 & v \end{pmatrix}\mapsto u
\qquad \text{and}\qquad
\begin{pmatrix} u &u\psi-\psi v \\ 0 & v \end{pmatrix}\mapsto v
$$
are $\tt{sot}$-isomorphisms between $G$ and their respective images in ${\rm GL}(\ell_2(T))$.
\end{thm}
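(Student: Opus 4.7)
The plan naturally splits into three parts. First, the final assertion---that the projections $T \mapsto u$ and $T \mapsto v$ are $\mathtt{sot}$-isomorphisms of $G$ onto their respective images in ${\rm GL}(\ell_2(T))$---follows immediately from the preceding Corollary applied in our setting, since $\lambda$ is irreducible on $\ell_2(T)$ and the derivation $d$ is non-inner (as established earlier). In particular, every $T \in G$ is uniquely determined by either of its diagonal entries, so the upper-right entry may be written as a function $w = w(u,v)$. Denote by $U$ and $V$ the images of $G$ under the two projections, and let $\sigma\colon U \til V$ be the induced $\mathtt{sot}$-isomorphism, which restricts to the identity on $\lambda[{\rm Aut}(T)]$.

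Second, the heart of the argument is the construction of $\psi$. The relation $w(T) = u\psi - \psi v$ is equivalent to demanding that the graph $\Omega := \{(-\psi(y),y) \colon y \in \ell_2(T)\} \subseteq \ell_2(T) \oplus \ell_2(T)$ be $G$-invariant, that is, to producing a continuous homogeneous $G$-equivariant section of the second-coordinate projection. My approach is to anchor $\psi$ at the base vector $\1_e$ by analysing the stabiliser subgroup $H = \{T \in G \colon v(T)\1_e = \1_e\}$. The cocycle relation $w(T_1 T_2) = u(T_1)w(T_2) + w(T_1)v(T_2)$ encodes exactly the compatibility required between values of $\psi$, and the rigidity supplied by Theorem \ref{commutant intro}---which in particular identifies $\C \cdot \1_e$ as the only $\lambda[{\rm Stab}_{{\rm Aut}(T)}(e)]$-fixed subspace in $\ell_2(T)$---should pin $\psi(\1_e)$ down up to a scalar, and then force the assignment $\psi(v\1_e) := u(T)\psi(\1_e) - w(T)\1_e$ (for any $T \in G$ with $v(T)\1_e = v\1_e$) to be consistent. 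Extending the definition from the orbit $V \cdot \1_e$ to all of $\ell_2(T)$ proceeds by a parallel construction at other base vectors together with the cocycle gluing; continuity and homogeneity of $\psi$ flow from the $\mathtt{sot}$-continuity of the $G$-action on $\ell_2(T) \oplus \ell_2(T)$ and the linearity of each individual operator $T$.

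Third, with $\psi$ in place, the asserted commutation properties reduce to routine algebra: specialising $w(T) = u\psi - \psi v$ to $T = \lambda_d(g) \in G$ (where $u=v=\lambda(g)$ and $w=d(g)$) yields $\lambda(g)\psi - \psi\lambda(g) = d(g)$, whence
$$
(L^*+\psi)\lambda(g) - \lambda(g)(L^*+\psi) = (L^*\lambda(g) - \lambda(g)L^*) + (\psi\lambda(g) - \lambda(g)\psi) = d(g) - d(g) = 0,
$$
showing that $L^*+\psi$ commutes with $\lambda(g)$ as a map $\ell_2(T) \til \ell_\infty(T)$; the analogous computation for $L - \psi$ on $\ell_1(T)$ uses the alternative form $d(g) = \lambda(g)L - L\lambda(g)$. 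The principal obstacle throughout is the construction of $\psi$---specifically, verifying that $(u(T)-\Id)\psi(\1_e) = w(T)\1_e$ for every $T \in H$ (so that $\psi$ is well-defined on $V \cdot \1_e$) and assembling the local definitions into a globally defined continuous homogeneous map on $\ell_2(T)$. I expect this to hinge on a delicate interplay between Theorem \ref{commutant intro}, the $\mathtt{sot}$-isomorphism of the diagonal projections, and the explicit form of the derivation $d$.
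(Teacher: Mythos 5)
The final assertion and the commutation computations in your third step are fine and coincide with the paper's argument: irreducibility of $\lambda$ plus non-innerness of $d$ feed into Theorem \ref{hilbert}, and specialising $\delta(u,v)=u\psi-\psi v$ at $\lambda_d(g)$ gives the two commutation relations, using $d(g)=L^*\lambda(g)-\lambda(g)L^*$ on $\ell_2(T)$ and $d(g)=\lambda(g)L-L\lambda(g)$ on $\ell_1(T)$. The gap is exactly where you locate it: the existence of the continuous homogeneous $\psi$ with $\delta(u,v)=u\psi-\psi v$, which you only sketch speculatively, and your orbit-anchored construction cannot work as described. First, $\psi$ is not unique --- the paper's subsequent examples show $\psi(\1_e)=\mu\1_e$ for an undetermined scalar $\mu$ --- so no rigidity statement such as Theorem \ref{commutant intro} can ``pin down'' $\psi(\1_e)$; at best it constrains it to lie in $\C\1_e$ (indeed, for $T=\lambda_d(g)$ with $g(e)=e$ one has $d(g)\1_e=0$, so your stabiliser condition reduces to mere invariance). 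Second, and more seriously, the orbit $V\cdot\1_e$ of a bounded group is contained in an annulus and, together with the orbits of any family of further base vectors, covers only a thin part of $\ell_2(T)$; since $\psi$ is not additive (the defect $\Delta(x,y)=\psi(x)+\psi(y)-\psi(x+y)$ cannot vanish identically when $d$ is not inner), no linear extension is available, and ``cocycle gluing'' provides neither well-definedness across orbits nor any mechanism for continuity or homogeneity: $\mathtt{sot}$-continuity of the action does not yield a continuous selection transverse to the orbits.

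What actually closes the gap is a soft general fact that does not use the specific rigidity of ${\rm Aut}(T)$ at all: for any bounded subgroup $G\leqslant {\rm GL}(Y\oplus Z)$ of a separable reflexive space leaving $Y$ invariant, there is a continuous homogeneous $\psi\colon Z\til Y$ with $\delta(u,v)=u\psi-\psi v$ (Lemma \ref{repofderivation}). This rests on the $G$-equivariant continuous homogeneous lifting of the quotient map (Lemma \ref{lifting}): renorm the space with a $G$-invariant LUR norm (Lancien), and compose a Bartle--Graves selector with the nearest-point map onto $Y$, so that each coset $x+Y$ is sent to its unique element of minimal norm; equivariance, continuity and homogeneity of this lifting are then automatic, and writing it as $z\mapsto(-\psi(z),z)$ translates its equivariance precisely into $\delta(u,v)=u\psi-\psi v$. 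Replacing your second step by this argument, the remainder of your outline reproduces the paper's proof.
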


We subsequently use this result to compute some simple values of $\psi$, which could be useful for extracting information about $G$.


\section{Strict convexity of separable transitive Banach spaces}
Let $(X,\norm\cdot)$ be a fixed separable {\em transitive} Banach space, i.e., whose linear isometry group ${\rm Isom}(X,\norm\cdot)$ acts transtively on the unit sphere $S_X=\{x\in X\del \norm x=1\}$. By a classical theorem of S. Mazur (Theorem 8.2 \cite{fabian}) the norm $\norm\cdot$ is G\^ateaux differentiable on a dense $G_\delta$ subset of $S_X$. So, by transitivity of the norm, this implies that $\norm\cdot$ is actually G\^ateaux differentiable at every point of $S_X$. Hence the following lemma.

\begin{lemma}\label{gateaux}
Let $(X,\norm\cdot)$ be a separable transitive Banach space. Then $\norm\cdot$ is G\^ateaux differentiable, i.e., for every $x\in S_X$, there is a unique support functional $\phi_x\in S_{X^*}$, that is, so that $\phi_x(x)=1$.
\end{lemma}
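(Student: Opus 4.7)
The plan is essentially outlined in the paragraph immediately preceding the lemma statement. Let me flesh it out.

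First, I would invoke Mazur's classical theorem (Theorem 8.2 in \cite{fabian}), which says that on a separable Banach space the norm is G\^ateaux differentiable on a dense $G_\delta$ subset $D \subseteq S_X$. In particular, $D$ is nonempty, so there exists at least one point $x_0 \in S_X$ at which $\norm\cdot$ admits a unique support functional.

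Next, I would observe that linear isometries preserve G\^ateaux differentiability of the norm on the sphere. Indeed, for any $T\in {\rm Isom}(X,\norm\cdot)$ and $y\in S_X$, the limit
$$
\lim_{t\to 0}\frac{\norm{Ty+th}-\norm{Ty}}{t}=\lim_{t\to 0}\frac{\norm{y+tT^{-1}h}-\norm{y}}{t}
$$
exists for every $h\in X$ if and only if the norm is G\^ateaux differentiable at $y$. Thus, if the norm is G\^ateaux differentiable at some point $x_0\in S_X$, it is also G\^ateaux differentiable at $Tx_0$ for every $T\in {\rm Isom}(X,\norm\cdot)$.

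Combining these two observations with the transitivity hypothesis, since ${\rm Isom}(X,\norm\cdot)$ acts transitively on $S_X$, every $x\in S_X$ is of the form $Tx_0$ for some linear isometry $T$, and the norm is therefore G\^ateaux differentiable at every $x\in S_X$. Finally, I would invoke the standard equivalence between G\^ateaux differentiability of the norm at $x\in S_X$ and the uniqueness of the support functional $\phi_x\in S_{X^*}$ with $\phi_x(x)=1$ (see, e.g., \cite{fabian}). This yields the desired conclusion.

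There is no serious obstacle here: the argument is just the combination of Mazur's theorem, the trivial equivariance of the set of differentiability points under the isometry group, and transitivity. The only mild subtlety is checking that G\^ateaux differentiability really does transfer under isometries, which reduces to a change of variables in the difference quotient as displayed above.
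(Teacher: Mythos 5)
Your proposal is correct and follows exactly the paper's own argument: Mazur's theorem gives a dense $G_\delta$ set of G\^ateaux differentiability points in $S_X$, the isometry group preserves this set (your change-of-variables check of the difference quotient is the right verification), and transitivity then yields differentiability, hence uniqueness of the support functional, at every point of the sphere.
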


Now, G\^ateaux differentiablity of norms and strict convexity are related via duality (see Corollary 7.23 \cite{fabian}), in the sense that, e.g., G\^ateaux differentiability of the dual norm $\norm\cdot^*$ implying strict
convexity of $\norm\cdot$. However, little information can be gained directly from the G\^ateaux differentiability of the norm on $X$. Nevertheless, using the Bishop--Phelps theorem, we shall see that the norm is actually strictly convex. For this, let us recall the statement of the Bishop--Phelps theorem: If $C$ is a non-empty bounded closed convex subset of a real Banach space $X$, then the set
$$
\{\phi\in X^*\del \e x\in C\; \sup_{y\in C}\phi(y)=\phi(x)\}
$$
is norm-dense in $X^*$.

\begin{lemma}
Let $X$ be a separable real Banach space and $C\subseteq S_X$ a non-empty closed convex set so that the setwise stabiliser $\{T\in {\rm Isom}(X)\del T[C]=C\}$ acts transitively on $C$. Then $C$ consists of a single point. 
\end{lemma}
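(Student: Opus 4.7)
I argue by contradiction, supposing that $C$ contains two distinct points $a$ and $b$. The main tool is the Bishop--Phelps theorem combined with the $G$-transitivity on $C$.

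The first move is to produce a support functional of $C$ that is non-constant on $C$. By Hahn--Banach, there is $\psi\in X^*$ with $\psi(a)\neq\psi(b)$, and by Bishop--Phelps the set of functionals attaining their supremum on the bounded closed convex set $C$ is norm-dense in $X^*$. Choosing $\phi$ close enough to $\psi$, I obtain $\phi\in X^*$ attaining $M:=\sup_C\phi$ at some $x_0\in C$ while still separating $a$ from $b$. The face $F_\phi:=\{y\in C:\phi(y)=M\}$ is then a non-empty proper closed convex subset of $C$. This step already settles the opposite branch of the natural dichotomy: had every Bishop--Phelps support functional of $C$ been constant on $C$, the norm-density of such functionals in $X^*$ together with the norm-closedness of the condition ``constant on $C$'' would make every $\phi\in X^*$ constant on $C$, giving $|C|=1$ directly by Hahn--Banach.

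Next I would exploit the transitivity of $G$ on $C$ through the orbit structure of $F_\phi$. For each $T\in G$, the translate $T[F_\phi]$ equals the face $F_{\phi\circ T^{-1}}$ of $C$, again with supremum value $M$. Transitivity gives $\bigcup_{T\in G}T[F_\phi]=C$ (since $G\cdot x_0=C$), while $\bigcap_{T\in G}T[F_\phi]$ is $G$-invariant and contained in the proper face $F_\phi$, hence empty. To turn this structural observation into a contradiction I would dualise and apply a fixed-point theorem: let $K\subseteq X^*$ be the weak-$*$-closed convex hull of the $G$-orbit $\{\phi\circ T^{-1}:T\in G\}$. By Banach--Alaoglu and the separability of $X$, $K$ is weak-$*$-compact; it is $G$-invariant, and the $G$-action on $X^*$ is weak-$*$-continuous and isometric, hence noncontracting. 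A Ryll--Nardzewski-type theorem (in the weak-$*$ version due to Namioka) then produces a $G$-fixed $\bar\phi\in K$, and transitivity of $G$ on $C$ forces $\bar\phi$ to be constant on $C$.

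The hard part is closing the argument from here: on its own, the $G$-invariance of $\bar\phi$ only places $C$ inside an affine hyperplane, which is not yet absurd. I would try to complete the proof either by iterating the same construction inside this hyperplane slice --- which inherits the transitivity hypothesis --- or, alternatively, by strengthening the initial Bishop--Phelps step to produce a $\phi$ whose face $F_\phi$ is a single point. In the latter case, transitivity together with the $G$-invariance of $\mathrm{Ext}(C)$ would force every point of $C$ to be extreme, and convexity would collapse $C$ to a singleton. Producing such a strongly exposing functional under the bare hypotheses is the main technical difficulty, since Bishop--Phelps alone guarantees only an arbitrary support point rather than a strongly exposed one.
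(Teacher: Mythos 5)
There is a genuine gap here, and you have correctly identified where it is: your argument sets up the right contradiction scheme (Bishop--Phelps produces a support functional of $C$ that is non-constant on $C$) but never closes it. Neither of your two proposed exits works under the bare hypotheses. The Ryll--Nardzewski/Namioka route only yields a $G$-fixed functional, and, as you note yourself, a $G$-invariant functional constant on $C$ merely places $C$ in an affine hyperplane --- no contradiction; iterating inside hyperplane slices does not terminate in infinite dimensions. The strongly-exposing-functional route is also not available: density of strongly exposing functionals requires something like the Radon--Nikod\'ym property, which is not assumed, and Bishop--Phelps alone only gives support points.

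The missing idea is elementary and uses separability directly: pick a dense sequence $(x_n)$ in $C$, choose $\lambda_n>0$ with $\sum_n\lambda_n=1$, and form the ``barycenter'' $x=\sum_n\lambda_n x_n\in C$ (the series converges absolutely since $C\subseteq S_X$). This single point has the property that \emph{any} $\phi\in X^*$ attaining its supremum over $C$ at $x$ is constant on $C$: indeed $\phi(x)=\sum_n\lambda_n\phi(x_n)\leqslant\sum_n\lambda_n\phi(x)=\phi(x)$ forces $\phi(x_n)=\phi(x)$ for all $n$, and density does the rest. Now transitivity of the setwise stabiliser enters exactly once: if your Bishop--Phelps functional $\phi$ (non-constant on $C$, since it still separates $a$ from $b$) attains its supremum at $v\in C$, choose $T$ in the stabiliser with $Tx=v$; then $T^*\phi$ attains its supremum over $C=T^{-1}[C]$ at $x$, hence is constant on $C$, hence $\phi$ is constant on $T[C]=C$ --- contradiction. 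So the fix is not a stronger exposure theorem or a fixed-point argument, but the observation that there is one distinguished point of $C$ at which every support functional is automatically degenerate, and that transitivity lets you move any support point there.
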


\begin{proof}
Since $X$ is separable, we can pick a dense sequence $(x_n)$ in $C$. Let also $\lambda_n>0$ be so that $\sum_n\lambda_n=1$ and define $x=\sum_n\lambda_nx_n\in C$ (note that since $\norm{x_n}=1$ the sum is absolutely convergent).  

We claim that, if $\phi\in X^*$ attains its maximum on $C$ at $x$, then $\phi$ must be constant on $C$. Indeed, in this case, 
$$
\phi(x)=\phi\big(\sum_n\lambda_nx_n\big)=\sum_n\lambda_n\phi(x_n)\leqslant \sum_n\lambda_n\phi(x)=\phi(x),
$$
so $\phi(x_n)=\phi(x)$ for all $n$, whence $\phi\equiv \phi(x)$ on $C$.

Now suppose for a contradiction that $C$ contains distinct points $y$ and $z$. We pick $\psi \in X^*$ of norm $1$ so that $\psi(y-z)=\eps>0$. Then, by the theorem of Bishop--Phelps, there is some $\phi\in X^*$ with 
$\norm{\psi-\phi}<\frac\eps{2\norm{y-z}}$ that attains it supremum on $C$ at some point $v\in C$. Also,
$$
|\phi(y-z)|\geqslant |\psi(y-z)|-|(\psi-\phi)(y-z)|\geqslant \eps-\norm{\psi-\phi}\cdot\norm{y-z}>\frac \eps2>0,
$$
so $\phi$ is not constant on $C$. 

Choose some $T\in {\rm Isom}(X)$ with $T[C]=C$ so that $Tx=v$ and note that $T^*\phi$ attains it maximum on $C$ at $x$ and thus must be constant on $C$. However, this is absurd, since $T[C]=C$ and $\phi$ fails to be constant on $C$.
\end{proof}

\begin{thm}
Let $(X,\norm\cdot)$ be a separable real transitive Banach space. Then $X$ is strictly convex and $\norm\cdot$ is G\^ateaux differentiable.
\end{thm}

\begin{proof}
We already know that $\norm\cdot$ is G\^ateaux differentiable and thus every $x\in S_X$ has a unique support functional $\phi_x\in S_{X^*}$. Now, for $x\in S_X$, consider the closed convex subset
$$
C_x=\{z\in S_X\del \phi_z=\phi_x\}=\{z\in S_X\del \phi_x(z)=1\},
$$
where the second equality follows from the uniqueness of the support functional. 

Then, for all $T\in {\rm Isom}(X)$, either $T[C_x]=C_x$ or $T[C_x]\cap C_x=\tom$. For, suppose $z,Tz\in C_x$.
Then
$T^*\phi_x(z)=\phi_x(Tz)=1=\phi_x(z)$, whence by uniqueness of support functionals we have $T^*\phi_x=\phi_x$ and hence
$$
\phi_x(Ty)=T^*\phi_x(y)=1
$$
for all $y\in C_x$, i.e., $T[C_x]\subseteq C_x$. Similarly, $T\inv [C_x]\subseteq C_x$ and thus $T[C_x]=C_x$. 

Therefore, as ${\rm Isom}(X)$ acts transitively on $S_X$, we see that $\{T\in {\rm Isom}(X)\del T[C_x]=C_x\}$ acts transitively on $C_x$ and hence, by the preceding lemma, $C_x=\{x\}$. It follows that the mapping $x\in S_x\mapsto \phi_x\in S_{X^*}$ is injective and that every functional $\phi\in S_{X^*}$ attains its norm in at most one point of $S_X$, that is, $X$ is strictly convex.
\end{proof}

We note that, by the theorem of Bishop-Phelps, the set of norm attaining functionals is norm dense in $X^*$, which, in our setting means that the $\phi_x$ for $x\in S_X$ are norm dense in $S_{X^*}$. Moreover, by the G\^ateaux differentiability of the norm, the mapping $x\in S_X\mapsto \phi_x\in S_{X^*}$ is $\norm\cdot$-to-$w^*$ continuous. The action of ${\rm Isom}(X)$ on $S_{X^*}$ is transitive on the set $\{\phi_x\}_{x\in S_X}$ and thus $X^*$ is almost transitive. However, little geometric information can be obtained exclusively from almost transitivity, since it is known by a result of W. Lusky \cite{lusky} that every Banach space $X$ is isometric to a complemented subspace of an almost transitive Banach space.


\section{On bounded representations with invariant subspaces}\label{bounded}
In the following, we consider a separable reflexive Banach space $X$ and a bounded subgroup $G\leqslant {\rm GL}(X)$ along with a $G$-invariant closed linear subspace $Y\subseteq X$.
We let $\pi\colon X\til X/Y$ denote the canonical quotient map and write $\dot x$ for $\pi(x)=x+Y\in X/Y$. Note also that every $T\in G$ induces an operator $\dot T\in {\rm GL}(X/Y)$ defined by
$$
\dot T(\dot x)=\big(Tx\big)^{\!\centerdot},
$$
i.e., $\dot T(x+Y)=Tx+Y$. Moreover, as $\norm{\dot T(\dot x)}=\Norm{\big(Tx\big)^{\!\centerdot}}\leqslant \norm{Tx}\leqslant \norm T\cdot\norm x$ for all $x\in X$, we see that $\norm{\dot T}\leqslant \norm T$. In particular, $\dot G=\{\dot T\in {\rm GL}(X/Y)\del T\in G\}$ is a bounded subgroup of ${\rm GL}(X/Y)$.

\begin{lemma}\label{lifting}
Let $X$ be a separable reflexive Banach space, $G\leqslant {\rm GL}(X)$  a bounded subgroup and 
suppose that $Y\subseteq X$ is a $G$-invariant closed linear subspace.
Then there is a continuous, homogeneous and thus bounded $G$-equivariant lifting $\phi\colon X/Y\til X$ of the quotient map $\pi$, that is, $\pi\circ \phi={\rm Id}_{X/Y}$ and $\phi\dot T=T\phi$ for all $T\in G$.
\end{lemma}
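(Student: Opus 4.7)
My plan is to construct $\phi$ as the \emph{minimum-norm section}: once $X$ is equipped with an appropriate equivalent norm under which $G$ acts by isometries and which is strictly convex, I define $\phi(\dot x)$ to be the unique element of the closed convex coset $x+Y$ of smallest norm. Existence will follow from reflexivity together with weak lower semicontinuity of the norm, and uniqueness from strict convexity. For continuity of $\phi$ I moreover need the norm to be locally uniformly convex (LUR). The preparatory step, which I regard as the main obstacle, is therefore to produce an equivalent norm on $X$ that is at once $G$-invariant and LUR. This is delicate because LUR is not preserved under suprema, so one cannot simply form $\sup_{T\in G}|Tx|$ from a given LUR norm $|\cdot|$. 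In the separable reflexive setting with $G$ bounded, however, such a renorming can be obtained by an averaging procedure over $G$ compatible with a Troyanski/Kadec LUR renorming. From here on I assume $\norm\cdot$ itself is $G$-invariant and LUR.

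Granting this, three of the required properties of $\phi$ are immediate. First, $\pi\phi(\dot x)=\dot x$ by construction. Second, $\phi$ is homogeneous, since $\lambda(x+Y)=\lambda x+Y$ and $\norm{\lambda z}=|\lambda|\norm z$, so the minimiser of the scaled coset is $\lambda\phi(\dot x)$. Third, for $T\in G$, the map $T$ is an isometric bijection from $x+Y$ onto $Tx+Y$ because $T[Y]=Y$, and thus transports minimisers of the respective cosets to one another, giving $T\phi(\dot x)=\phi(\dot T\dot x)$.

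The substantive step is continuity. If $\dot x_n\til\dot x$ in $X/Y$ then $\norm{\phi(\dot x_n)}=\norm{\dot x_n}_{X/Y}\til\norm{\dot x}_{X/Y}=\norm{\phi(\dot x)}$, so $(\phi(\dot x_n))$ is norm-bounded. By reflexivity, any subsequence has a further weakly convergent subsequence, and its weak limit $z$ satisfies $\pi(z)=\dot x$ and $\norm z\leqslant\liminf\norm{\phi(\dot x_n)}=\norm{\phi(\dot x)}$; minimality then forces $\norm z=\norm{\phi(\dot x)}$ and strict convexity forces $z=\phi(\dot x)$. Hence $\phi(\dot x_n)\rightharpoonup\phi(\dot x)$ with $\norm{\phi(\dot x_n)}\til\norm{\phi(\dot x)}$, and the LUR property of $\norm\cdot$ upgrades this to norm convergence $\phi(\dot x_n)\til\phi(\dot x)$. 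Boundedness of $\phi$ then follows automatically from homogeneity together with continuity at $0$.
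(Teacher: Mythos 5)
Your proposal is correct and follows essentially the same route as the paper: after passing to a $G$-invariant LUR renorming, $\phi(\dot x)$ is taken to be the unique minimal-norm point of the coset $x+Y$, with equivariance and homogeneity read off from the invariance of that description. Two minor remarks: the $G$-invariant LUR renorming is precisely Lancien's theorem for separable reflexive spaces (the paper's citation), which is obtained from dentability indices rather than any averaging over $G$ (averaging need not make sense for a general bounded group); and where the paper derives continuity of $\phi$ by composing a Bartle--Graves selector with the continuous metric projection onto $Y$, you prove it directly via reflexivity, weak lower semicontinuity and the Kadec--Klee property of LUR norms --- a harmless variation of the same idea.
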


\begin{proof}
First, by results of G. Lancien \cite{lancien}, since $X$ is separable reflexive and $G\leqslant {\rm GL}(X)$ is bounded, there is an equivalent $G$-invariant LUR norm $\norm\cdot$ on $X$.
  In other words, $G$ is a subgroup of the group ${\rm Isom}(X,\norm\cdot)$ of linear isometries of $X, \norm\cdot$. 

Now, since $X$ is reflexive and $\norm\cdot$ is LUR, the map $c\colon X\til Y$ defined by
$$
c(x)=\text{ the unique point $y\in Y$ closest to $x$}
$$
is well-defined and continuous (see Exercise 7.47 \cite{fabian}). Note then that, for every $x\in X$,  $x-c(x)$ is the unique point in $x+Y$ of minimal norm. Let $b\colon X/Y\til X$ be a Bartle--Graves selector (see Corollary 7.56 \cite{fabian}), that is, $b$ is a continuous lifting of the quotient mapping $\pi\colon X\til X/Y$. We then let $\phi\colon X/Y\til X$
be defined by $\phi(z)=b(z)-c(b(z))$ and note that, for $x\in X$,  $\phi(\dot x)$ is the unique point  of minimal norm in the affine subspace $x+Y\subseteq X$. 

Suppose that $x\in X$ and $T\in G$. Then, since $T[Y]=Y$ and $T$ is a linear isometry of $X$, 
\[\begin{split}
\phi(\dot T \dot x)
&=\phi\big((Tx)^{\centerdot}\big)\\
&=\text{ the unique point in $Tx+Y$ of minimal norm}\\
&=\text{ the unique point in $T[x+Y]$ of minimal norm}\\
&=T\big(\text{the unique point in $x+Y$ of minimal norm}\big)\\
&=T\phi(\dot x).
\end{split}\]
Thus, $\phi\dot T=T\phi$ for all $T\in G$, i.e., $\phi$ is a continuous $G$-equivariant lifting of the quotient map. Similarly, for $x\in X$ and  $\lambda$ a scalar, 
\[\begin{split}
\phi(\dot x)=x
&\equi\a y\in Y\; \norm{x}\leqslant \norm{x+y}\\
&\equi\a y\in Y\; \norm{\lambda x}\leqslant \norm{\lambda (x+y)}\\
&\equi\a y\in Y\; \norm{\lambda x}\leqslant \norm{\lambda x+y}\\
&\equi\phi(\lambda\dot x)=\lambda x,
\end{split}\]
whence $\phi$ is homogeneous. 
Finally, let us also note that $\|\phi(\dot x)\|=\|\dot x\|$ for all $x \in X$. 
\end{proof}

We observe that in Lemma \ref{lifting}, if $\norm \cdot$ denotes the original norm on $X$, then $\phi$ may be chosen of norm at most $(1+\epsilon) \sup_{T \in G}\norm T$, for any choice of $\epsilon>0$. This easily follows from the construction of a $G$-invariant LUR norm on $X$. Indeed,   define $\triple{x}=\sup_{T \in G}\norm{Tx}$, and  let $\triple\cdot'$ be an equivalent LUR norm
for which  
$$
{\rm Isom}(X, \triple \cdot) \leqslant {\rm Isom}(X, \triple \cdot').
$$ 
By density of the LUR property in the space of norms (\cite{DGZ} or Proposition 4.5 \cite{duke}), $\triple \cdot'$ may be chosen so that
 $\triple \cdot \leqslant {\triple \cdot'} \leqslant (1+\epsilon)\triple \cdot$.  Letting $\phi$ denote the lifting of Lemma \ref{lifting}, we have $\triple{\phi(\dot x)}'=\triple{\dot x}'$ and thus
$$
\norm{\phi (\dot x)} \leqslant \triple{\phi (\dot x)} \leqslant \triple{\phi (\dot x)}'=\triple{\dot x}' \leqslant (1+\epsilon) \triple{\dot x} \leqslant
(1+\epsilon) \big(\sup_{T \in G}\norm{T}\big) \norm{\dot x}.
$$
for all $x\in X$.
This estimate may be improved by dropping the continuity property.

\begin{lemma}  
Let $X$ be a separable reflexive Banach space, $G\leqslant {\rm GL}(X)$  a bounded subgroup and 
suppose that $Y\subseteq X$ is a $G$-invariant closed linear subspace. Then there exists a homogeneous $G$-equivariant lifting $\phi\colon X/Y\til X$ of the quotient map $\pi$ with norm at most $\sup_{T \in G}\|T\|$.
\end{lemma}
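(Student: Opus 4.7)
My plan is to pass to the $G$-invariant norm $\triple x=\sup_{T\in G}\norm{Tx}$, which is equivalent to $\norm\cdot$ via $\norm x\leqslant\triple x\leqslant M\norm x$, where $M=\sup_{T\in G}\norm T$, and under which every $T\in G$ is a linear isometry. If I can produce a homogeneous $G$-equivariant lifting $\phi\colon X/Y\til X$ satisfying $\triple{\phi(\dot x)}\leqslant\triple{\dot x}$, then
$$
\norm{\phi(\dot x)}\leqslant\triple{\phi(\dot x)}\leqslant\triple{\dot x}\leqslant M\norm{\dot x},
$$
yielding the desired operator norm estimate. Since $\pi$ is a contraction for the quotient $\triple\cdot$-norm, the inequality $\triple{\phi(\dot x)}\leqslant\triple{\dot x}$ actually forces equality, so $\phi(\dot x)$ has to be a $\triple\cdot$-minimiser on the affine coset $x+Y$.

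For each $\dot x\in X/Y$, set $M(\dot x)=\{z\in x+Y\del \triple z=\triple{\dot x}\}$. By reflexivity of $X$ (equivalent norms give the same reflexivity), the distance from $0$ to the closed subspace $x+Y$ is attained, so $M(\dot x)$ is non-empty, and it is closed, bounded and convex, hence weakly compact. Let $H$ denote the group of operators of the form $\lambda T$ with $\lambda$ a non-zero scalar and $T\in G$, acting on both $X$ and $X/Y$. For any $\dot x_0\neq 0$, a short computation, using that both $\dot T$ and $\dot T^{-1}$ are $\triple\cdot$-contractive on $X/Y$, shows that every element of the stabiliser $H_{\dot x_0}$ has the form $\lambda T$ with $|\lambda|=1$, and one then checks that $H_{\dot x_0}$ preserves $M(\dot x_0)$ and acts on it by affine $\triple\cdot$-isometries.

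The key step is to apply the Ryll--Nardzewski fixed point theorem to the action of $H_{\dot x_0}$ on the non-empty weakly compact convex set $M(\dot x_0)$: since the action is by affine isometries (hence in particular distal), there is a common fixed point $z_0\in M(\dot x_0)$. Using the axiom of choice, I pick one representative $\dot x_0$ from each $H$-orbit in $X/Y\setminus\{0\}$, choose $\phi(\dot x_0)=z_0$ as above, and set $\phi(0)=0$. I extend $\phi$ along each orbit by declaring $\phi(\lambda \dot T\dot x_0)=\lambda T\phi(\dot x_0)$; the $H_{\dot x_0}$-fixedness of $z_0$ makes this independent of the chosen representation, so $\phi$ is well-defined. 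By construction, $\phi$ is homogeneous, $G$-equivariant and a lifting of $\pi$, and the first paragraph supplies the operator norm bound.

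The main obstacle is precisely this well-definedness: the equivariant extension along a single orbit requires a point of $M(\dot x_0)$ simultaneously fixed by the entire stabiliser $H_{\dot x_0}$, which may be highly non-commutative, so simpler tools such as the Markov--Kakutani theorem would not suffice. Ryll--Nardzewski handles it because the setup has been arranged so that an arbitrary stabiliser acts by affine isometries on a non-empty weakly compact convex set.
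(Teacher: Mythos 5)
Your proof is correct, but it takes a genuinely different route from the paper's. The paper deduces this lemma from its preceding construction: for each $n$ it takes the continuous homogeneous $G$-equivariant lifting of Lemma~\ref{lifting} associated to a $G$-invariant LUR renorming (Lancien's theorem, the nearest-point map onto $Y$, a Bartle--Graves selector), chosen by the remark with norm at most $(1+\frac1n)\sup_{T\in G}\norm T$, and then defines $\phi(\dot x)$ as a weak limit of the $\phi_n(\dot x)$ along a non-principal ultrafilter, reflexivity supplying the limit; the sharper constant is bought at the cost of losing continuity in the limit. You instead build the lifting directly: passing to the invariant norm $\triple\cdot$, you observe that the set of $\triple\cdot$-minimisers in the coset $x+Y$ is a non-empty weakly compact convex set, that the stabiliser of $\dot x$ inside the group of non-zero scalar multiples of elements of $G$ acts on it by (weakly continuous, norm-distal) affine isometries, and you extract a common fixed point by Ryll--Nardzewski; choosing orbit representatives by the axiom of choice and extending equivariantly yields a well-defined homogeneous $G$-equivariant selection with $\triple{\phi(\dot x)}=\triple{\dot x}$, whence the bound $\norm{\phi(\dot x)}\leqslant \sup_{T\in G}\norm T\cdot\norm{\dot x}$. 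Your identification of Ryll--Nardzewski, rather than Markov--Kakutani, as the tool that handles a possibly non-commutative (indeed non-amenable) stabiliser is exactly the right point, and your well-definedness computation is sound. What each approach buys: yours avoids the LUR renorming, the Bartle--Graves selector and the limiting procedure entirely, and in fact never uses separability of $X$, so it proves a slightly more general statement; the paper's argument is essentially a two-line increment over machinery it needs anyway for the continuous lifting of Lemma~\ref{lifting}, whereas yours is self-contained but intrinsically non-constructive and produces a lifting with no continuity properties whatsoever (which the statement does not require).
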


\begin{proof}
By the above remark, let $\phi_n$ be a homogeneous $G$-equivariant lifting
associated to a choice of equivalent $G$-invariant LUR renorming ${\norm\cdot}_n$ of norm
at most $(1+\frac{1}{n}) \sup_{T \in G}\|T\|$. 
Use reflexivity to define, for all $\dot x \in X/Y$, $\phi(\dot x)$ as a weak limit along a non-trivial ultrafilter,
$$
\phi(\dot x)=w-\lim_{n\til\mathcal U} \phi_n(\dot x).
$$
It is easily checked that $\phi$ is a  homogeneous $G$-equivariant lifting of $\pi$ of norm
at most  $\sup_{T \in G}\|T\|$. 
\end{proof}

Note that, if $\phi$ is the lifting defined by Lemma \ref{lifting}, then $p\colon X\til Y$ given by $p(x)=x-\phi(\dot x)$ is a continuous homogeneous (potentially non-linear) projection of $X$ onto its subspace $Y$. Moreover, in this case, we can define a homogeneous homeomorphism between $X$ and $Y\oplus X/Y$ via $x\mapsto (p(x), \dot x)$ with homogeneous inverse $(y,z)\mapsto y+\phi(z)$. By the $G$-equivariance of $\phi$, we also have
$$
Tx\mapsto (Tx-\phi(\dot T\dot x), \dot T\dot x)=(Tx-T\phi(\dot x), \dot T\dot x)=\big((T|_Y)(p(x)), \dot T\dot x\big),
$$
which shows that the action of $G$ on $X$ is conjugate by the above homeomorphism to the $G$-action on $Y\oplus X/Y$ given by the block diagonal representation
$$
T\mapsto \begin{pmatrix} T|_Y & 0 \\ 0 & \dot T \end{pmatrix}.
$$

\begin{lemma}\label{existence of derivation} Let $X$ be a separable reflexive Banach space, $G\leqslant {\rm GL}(X)$  a bounded subgroup and 
suppose that $Y\subseteq X$ is a $G$-invariant closed linear subspace.
Then the mapping $T \mapsto \begin{pmatrix} T|_Y & 0 \\ 0 & \dot T \end{pmatrix}$
is an injective homomorphism of $G$ into ${\rm GL}(Y \oplus X/Y)$.
\end{lemma}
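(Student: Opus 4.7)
The plan is to establish the two properties—being a homomorphism and being injective—separately. The homomorphism property is essentially tautological: restriction to the $G$-invariant subspace $Y$ respects composition, and the quotient map $T \mapsto \dot T$ is by construction a group homomorphism into ${\rm GL}(X/Y)$. Hence the diagonal assignment $T \mapsto \begin{pmatrix} T|_Y & 0 \\ 0 & \dot T \end{pmatrix}$ is automatically a group homomorphism into ${\rm GL}(Y \oplus X/Y)$.

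The substantive step is injectivity, and the crucial input will be the continuous, homogeneous, $G$-equivariant lifting $\phi \colon X/Y \til X$ furnished by Lemma \ref{lifting}. Suppose $T \in G$ satisfies $T|_Y = \Id_Y$ and $\dot T = \Id_{X/Y}$; I want to deduce $T = \Id_X$. I would decompose an arbitrary $x \in X$ as
$$
x = \big(x - \phi(\dot x)\big) + \phi(\dot x),
$$
noting that $x - \phi(\dot x) \in Y$ since $\pi \circ \phi = \Id_{X/Y}$. The linearity of $T$ then lets me treat the two summands independently. The $Y$-summand is fixed by $T$ because $T|_Y = \Id_Y$, while $\phi(\dot x)$ is fixed by $T$ because $G$-equivariance of $\phi$ yields
$$
T\phi(\dot x) = \phi(\dot T \dot x) = \phi(\dot x).
$$
Summing, $Tx = x$ for every $x$, hence $T = \Id_X$.

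I do not anticipate any real obstacle. Morally, the discussion immediately preceding the lemma already showed that conjugation by the homogeneous homeomorphism $x \mapsto (x-\phi(\dot x), \dot x)$ carries the $G$-action on $X$ to the block diagonal representation $T \mapsto \begin{pmatrix} T|_Y & 0 \\ 0 & \dot T \end{pmatrix}$ on $Y \oplus X/Y$, and injectivity is an immediate consequence of this conjugacy. The only point worth a sanity check is that $\phi$ is only homogeneous rather than linear, but this is harmless here since the verification uses only the pointwise equivariance identity $T\phi = \phi \dot T$ together with the linearity of $T \in {\rm GL}(X)$.
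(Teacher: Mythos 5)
Your proposal is correct and follows essentially the same route as the paper: the paper deduces injectivity from the fact that the $G$-action on $X$ is conjugate, via the homeomorphism $x\mapsto (x-\phi(\dot x),\dot x)$ built from the equivariant lifting $\phi$ of Lemma \ref{lifting}, to the block diagonal action on $Y\oplus X/Y$, and your decomposition $x=(x-\phi(\dot x))+\phi(\dot x)$ together with $T\phi(\dot x)=\phi(\dot T\dot x)$ is exactly that conjugacy argument written out explicitly.
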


\begin{proof} Assume $T \in G$ satisfies $T|_Y=\Id_Y$ and $\dot T=\Id_{X/Y}$.
Then $T$ acts as the identity on $Y \oplus X/Y$, and
since the action of $G$ on $X$ is conjugate by homeomorphism to the action
of $G$ on $Y \oplus X/Y$, we deduce that $T=\Id$.
\end{proof}

\begin{thm}\label{sot isom}
Let $X$ be a separable reflexive Banach space and $G\leqslant {\rm GL}(X)$ be  a bounded subgroup. 
Suppose that $Y\subseteq X$ is a $G$-invariant closed linear subspace so that
\begin{itemize}
\item[(i)] there is no closed linear $G$-invariant subspace $\{0\}\varsubsetneq W\varsubsetneq Y$, 
\item[(ii)] there is no closed linear $G$-invariant complement of $Y$ in $X$.
\end{itemize}
Then the mapping $T\mapsto \dot{T}$ is an isomorphism of the topological groups $(G,\mathtt{sot})$ and $(\dot G, \mathtt{sot})$. 
\end{thm}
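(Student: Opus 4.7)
The SOT-continuity of the quotient homomorphism $T \mapsto \dot T$ is immediate from the estimate $\|\dot T \dot x - \dot S \dot x\|_{X/Y} \leqslant \|Tx - Sx\|_X$, so the task is to establish injectivity and SOT-continuity of the inverse map. My plan is to exploit the continuous, homogeneous, $G$-equivariant lifting $\phi \colon X/Y \to X$ provided by Lemma \ref{lifting} and to study the closed linear subspace
\[
Z := \overline{\mathrm{span}}\,\phi(X/Y) \subseteq X.
\]
Every aspect of the theorem will be read off from the identity $Z = X$, which is what I would aim to prove first.

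The first step is to verify that $Z$ is $G$-invariant: for each $T \in G$ and $\dot x \in X/Y$ one has $T\phi(\dot x) = \phi(\dot T \dot x) \in \phi(X/Y)$, so $G \cdot \phi(X/Y) \subseteq \phi(X/Y)$ and hence $G[Z] \subseteq Z$. Next I would observe that $Y \cap Z$ is then a closed $G$-invariant subspace of $Y$, so by hypothesis (i) it is either $\{0\}$ or $Y$. On the other hand $Y + Z = X$, because $\phi$ is a section and so every $x \in X$ decomposes as $x = (x - \phi(\dot x)) + \phi(\dot x) \in Y + Z$. The key point is to rule out $Y \cap Z = \{0\}$: in that case the open mapping theorem would give the topological direct sum $X = Y \oplus Z$ with $Z$ a closed $G$-invariant complement of $Y$, directly contradicting (ii). Hence $Y \subseteq Z$, and combined with $Y + Z = X$ this forces $Z = X$.

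Given the equality $Z = X$, both remaining properties follow cleanly. For injectivity: if $\dot T = \Id_{X/Y}$, then $T\phi(\dot x) = \phi(\dot T \dot x) = \phi(\dot x)$ for every $\dot x$, so $T$ fixes $\phi(X/Y)$ pointwise and, by linearity and continuity, fixes all of $Z = X$, forcing $T = \Id$. For continuity of the inverse, I would first note that $G$ is a topological group under $\mathtt{sot}$, since uniform boundedness of $G$ makes multiplication and inversion $\mathtt{sot}$-continuous; by the homomorphism property, it therefore suffices to verify continuity at the identity. So let $T_\alpha \in G$ with $\dot T_\alpha \to \Id$ in $\mathtt{sot}$. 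Continuity of $\phi$ yields $T_\alpha \phi(\dot x) = \phi(\dot T_\alpha \dot x) \to \phi(\dot x)$, so $T_\alpha \to \Id$ pointwise on $\phi(X/Y)$, thence on $\mathrm{span}\,\phi(X/Y)$ by linearity, and finally on the closure $Z = X$ via the uniform bound $\sup_\alpha \|T_\alpha\| < \infty$. Thus $T_\alpha \to \Id$ in $\mathtt{sot}$, as required.

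The principal subtlety I anticipate is the passage from the set-theoretic, non-linear complementation provided by $\phi(X/Y)$ to the linear, $G$-invariant object $Z$: because $\phi$ is only homogeneous rather than linear, it is the \emph{closed linear span} that must be taken to obtain $G$-invariance, and only at that point do the two structural hypotheses interact—(i) furnishing the dichotomy for $Y \cap Z$ and (ii) eliminating the trivial-intersection alternative—to force $Z$ to fill up $X$.
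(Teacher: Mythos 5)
Your proof is correct, but it takes a genuinely different route from the paper's. The paper also starts from the equivariant lifting $\phi$ of Lemma \ref{lifting}, but instead of forming $Z=\overline{\rm span}\,\phi(X/Y)$ it measures the failure of additivity of $\phi$ by the map $\Delta(\dot x_1,\dot x_2)=\phi(\dot x_1)+\phi(\dot x_2)-\phi(\dot x_1+\dot x_2)$, which takes values in $Y$ and is $G$-equivariant; hypothesis (ii) is used to show $\Delta$ is not identically zero (otherwise $\phi\circ\pi$ is a bounded linear $G$-commuting projection and its image is an invariant complement of $Y$), and hypothesis (i) then gives $\overline{\rm span}\big(G\cdot\Delta(\dot x_1,\dot x_2)\big)=Y$ for any nonzero value. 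Convergence is then propagated in two steps: $\dot T_n\to\dot T$ forces $T_n\to T$ pointwise on $Y$ via the vectors $S\Delta(\dot x_1,\dot x_2)$, and then on all of $X$ via the decomposition $x=\phi(\dot x)+y$; this proves the two-sided equivalence $T_n\to T\Leftrightarrow\dot T_n\to\dot T$ directly (and injectivity with it), whereas you reduce to continuity at the identity, which additionally requires the standard observation that a uniformly bounded group is a topological group under $\mathtt{sot}$ (inversion is $\mathtt{sot}$-continuous because the inverses lie in the same bounded group) -- a point you assert correctly but should justify with the one-line estimate $\norm{T_n^{-1}x-T^{-1}x}\leqslant\norm{T_n^{-1}}\,\norm{(T-T_n)T^{-1}x}$. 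Your use of the hypotheses is different and arguably cleaner: (i) is applied to the dichotomy for $Y\cap Z$ and (ii) excludes $Y\cap Z=\{0\}$ via the open mapping theorem, yielding at once that ${\rm span}\,\phi(X/Y)$ is dense in $X$, from which injectivity and continuity of the inverse fall out immediately. What the paper's argument buys in exchange is the finer structural fact that the $G$-orbit of a single defect vector $\Delta(\dot x_1,\dot x_2)$ already spans $Y$; this defect map $\Delta$ reappears and is exploited in the later analysis of the ${\rm Aut}(T)$ derivation, which is presumably why the authors organise the proof around it.
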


\begin{proof}
Let $\phi\colon X/Y\til X$ be the lifting given by Lemma \ref{lifting}.
Define $\Delta\colon X/Y\times X/Y\til X$ by $\Delta(\dot x_1,\dot x_2)=\phi(\dot x_1)+\phi(\dot x_2)-\phi(\dot x_1+\dot x_2)$ and observe that, since $ \phi(\dot x_1)\in x_1+Y$, $\phi(\dot x_2)\in x_2+Y$ and $\phi(\dot x_1+\dot x_2)\in (x_1+x_2)+Y$, we have $\Delta(\dot x_1,\dot x_2)\in Y$. Moreover, by $G$-equivariance of $\phi$, we find that
\[\begin{split}
T\Delta(\dot x_1,\dot x_2)
&=T\phi(\dot x_1)+T\phi(\dot x_2)-T\phi(\dot x_1+\dot x_2)\\
&=\phi(\dot T\dot x_1)+\phi(\dot T\dot x_2)-\phi(\dot T\dot x_1+\dot T\dot x_2)\\
&=\Delta(\dot T\dot x_1,\dot T\dot x_2)
\end{split}\]
for all $T\in G$ and $x_1,x_2\in X$.

We claim that $\Delta(\dot x_1,\dot x_2)\neq 0$ for some $x_1,x_2\in X$. Indeed, suppose not. Then $\phi$ is a bounded linear $G$-equivariant map, whereby the composition $P=\phi\circ\pi$ is a bounded linear projection with $\ker P=Y$ satisfying 
$$
PT(x)=\phi\pi (Tx)=\phi\big(\dot T\dot x)=T\phi(\dot x)=TP(x)
$$
for all $x\in X$, i.e., $PT=TP$. So $W=P[X]$ is a $G$-invariant closed linear complement of $Y$ in $X$, contradicting our assumption.

Thus, as $0\neq \Delta(\dot x_1,\dot x_2)\in Y$ and there are no non-trivial $G$-invariant closed linear subspaces of $Y$, we see that $\overline{\rm span}\big(G\cdot \Delta(\dot x_1,\dot x_2)\big)=Y$. 

We claim that, for all $T_n, T\in G$, we have 
$$
T_n\Lim{\mathtt{sot}}T
\;\;\equi \;\;
\dot T_n\Lim{\mathtt{sot}}\dot T.
$$
The implication from left to right is obvious. For the other direction, assume that $\dot T_n\Lim{\mathtt{sot}}\dot T$. Suppose first that $S\in G$ is given. Then, since $\phi$ and hence also $\Delta$ are continuous, we have  
$$
\lim_nT_nS\Delta(\dot x_1, \dot x_2)=\lim_n\Delta(\dot T_n\dot S \dot x_1, \dot T_n\dot S\dot x_2)=\Delta(\dot T\dot S \dot x_1, \dot T\dot S\dot x_2)=TS\Delta(\dot x_1, \dot x_2).
$$
As $\overline{\rm span}\big(G\cdot \Delta(\dot x_1,\dot x_2)\big)=Y$ and $G$ is a group of isometries, this shows that $T_ny\til Ty$ for all $y\in Y$.

Let now $x\in X$ be given and write $x=\phi(\dot x)+y$ for some $y\in Y$. Then, since $\phi$ is  continuous and $G$-equivariant, we have 
$$
T_nx=T_n\phi(\dot x)+T_ny=\phi(\dot T_n\dot x)+T_ny\;\Lim{n}\; \phi(\dot T\dot x)+Ty=T\phi(\dot x)+Ty=Tx,
$$
which shows that $T_n\Lim{\mathtt{sot}}T$. 

Since $T\til \dot T$ is clearly a group homomorphism, this shows that it is an isomorphism of the topological groups $(G, \mathtt{sot})$ and $(\dot G,\mathtt{sot})$. 
\end{proof}

\begin{cor}\label{isom general setting}
Let $X$ be a separable reflexive Banach space and $G\leqslant {\rm GL}(X)$ be  a bounded subgroup. 
Suppose that $Y\subseteq X$ is a $G$-invariant closed linear subspace so that
\begin{itemize}
\item[(i)] there is no closed linear $G$-invariant superspace $Y\varsubsetneq W\varsubsetneq X$, 
\item[(ii)] there is no closed linear $G$-invariant complement of $Y$ in $X$.
\end{itemize}
Then the mapping $T\mapsto T|_Y$  is an isomorphism of the topological groups $(G,\mathtt{sot})$ and $(G|_Y,\mathtt{sot})$, where $G|_Y=\{T|_Y\in {\rm GL}(Y)\del T\in G\}$.
\end{cor}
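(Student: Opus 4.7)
The plan is to deduce this corollary from Theorem \ref{sot isom} by dualization. First note that $X^*$ is also separable and reflexive: indeed $B_{X^*}$ is weakly compact and weakly metrizable (since $X$ is separable and weak $=$ weak$^*$ on $X^*$ by reflexivity), so $B_{X^*}$ is weakly separable and, by Mazur's theorem, norm-separable. Let $\sigma\colon G\to{\rm GL}(X^*)$ be the contragredient representation $\sigma(T)=(T^{-1})^*$; then $\sigma(G)$ is a bounded subgroup leaving the annihilator $Y^\perp\subseteq X^*$ invariant.

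Next we verify that the triple $(X^*,\sigma(G),Y^\perp)$ satisfies the hypotheses of Theorem \ref{sot isom} via the annihilator correspondence $V\leftrightarrow V^\perp$ between closed subspaces of $X^*$ and of $X^{**}=X$: this correspondence reverses inclusion, preserves $G$-invariance, and pairs direct sum decompositions. A proper closed $\sigma(G)$-invariant subspace of $Y^\perp$ would then correspond to a proper closed $G$-invariant superspace $Y\subsetneq W\subsetneq X$, contradicting (i); and a closed $\sigma(G)$-invariant complement of $Y^\perp$ in $X^*$ would correspond to a closed $G$-invariant complement of $Y$ in $X$, contradicting (ii). Applying Theorem \ref{sot isom} in $X^*$ yields a sot-isomorphism $\sigma(T)\mapsto\dot{\sigma(T)}$ from $(\sigma(G),\mathtt{sot})$ onto its image in $({\rm GL}(X^*/Y^\perp),\mathtt{sot})$. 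Under the canonical isometric identification $X^*/Y^\perp\cong Y^*$ via $[\phi]\mapsto\phi|_Y$, the operator $\dot{\sigma(T)}$ corresponds to the contragredient $(T|_Y^{-1})^*\in{\rm GL}(Y^*)$, so we obtain a sot-isomorphism $\sigma(T)\mapsto (T|_Y^{-1})^*$.

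It remains to transfer sot-continuity between a bounded representation on a separable reflexive space and its contragredient on the dual, applied to $(G,X)\leftrightarrow(\sigma(G),X^*)$ and to $(G|_Y,Y)\leftrightarrow((G|_Y)^*,Y^*)$. By Lancien's renorming theorem each of these separable reflexive spaces admits an equivalent LUR norm invariant under the relevant bounded subgroup, which therefore acts by isometries; on isometry groups of LUR spaces the Kadec--Klee property forces $\mathtt{wot}$ to coincide with $\mathtt{sot}$, and $\mathtt{wot}$-convergence is preserved under adjoints and inverses on bounded groups. Chaining the resulting primal--contragredient sot-isomorphisms with the one from the previous paragraph yields the desired sot-isomorphism $T\mapsto T|_Y$ between $(G,\mathtt{sot})$ and $(G|_Y,\mathtt{sot})$. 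The principal technical obstacle is precisely this sot-continuity transfer between a representation and its contragredient; the remainder of the argument is formal dualization via annihilators.
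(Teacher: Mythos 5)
Your proposal is correct and follows essentially the same route as the paper: dualise via the annihilator correspondence for the exact sequence $0\til Y\til X\til X/Y\til 0$, check the hypotheses of Theorem \ref{sot isom} for the (contra)adjoint group acting on $X^*$ with invariant subspace $Y^\perp$, identify $X^*/Y^\perp$ with $Y^*$, and transfer back using that on bounded groups over a separable reflexive space passage to adjoints is an $\mathtt{sot}$-isomorphism. The only differences are cosmetic (contragredient versus adjoint) and that you spell out the $\mathtt{sot}$-transfer step via LUR renorming and the Kadec--Klee property, which the paper simply asserts from reflexivity.
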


\begin{proof}As in the proof of Theorem \ref{sot isom}, we may assume that $G$ is a group of isometries of $X$.

Note that the short exact sequence
$$
0\til Y\til X\til X/Y\til 0
$$
gives rise to the short exact sequence 
$$
0\til Y^\perp \til X^*\til X^*/Y^\perp\til 0
$$
by duality. 

We set $G^*=\{T^*\in {\rm GL}(X^*)\del T \in G\}$. Then any $G^*$-invariant subspace $\{0\}\varsubsetneq V\varsubsetneq Y^\perp$ would induce a $G$-invariant subspace $Y\varsubsetneq W\varsubsetneq X$ by $W=V_\perp$. Similarly, a $G^*$-invariant complement $V$ of $Y^\perp$ in $X^*$ would induce a $G$-invariant complement $W=V_\perp$ of $Y$ in $X$. Therefore, we see that $G^*$ and $X^*$ satisfy the 
conditions of Theorem \ref{sot isom}, which means that the map $T^*\in G^*\mapsto (T^*)^\centerdot\in {\rm Isom}(X^*/Y^\perp)$ is an isomorphism of the topological group $(G^*,\mathtt{sot})$ with its image in   $\big({\rm Isom}(X^*/Y^\perp), \mathtt{sot}\big)$.

Now, since $X$ is reflexive, the map $T\mapsto T^*$ is an isomorphism of $\big({\rm Isom}(X), \mathtt{sot}\big)$ with $\big({\rm Isom}(X^*), \mathtt{sot}\big)$. Similarly, as $X^*/Y^\perp$ can be identified with $Y^*$ via Hahn--Banach, we again have an isomorphism between $\big({\rm Isom}(X^*/Y^\perp), \mathtt{sot}\big)$ and $\big({\rm Isom}(Y), \mathtt{sot}\big)$. Moreover, the composition of these three maps shows that $T\mapsto T|_Y$ is an isomorphism between $(G, \mathtt{sot})$ and $(G|_Y,\mathtt{sot})$. 
\end{proof}

\begin{cor}
Let $X$ be a separable reflexive Banach space and $G\leqslant {\rm GL}(X)$ be  a bounded subgroup. 
Suppose that $Y\subseteq X$ is a $G$-invariant closed linear subspace so that
\begin{itemize}
\item[(i)] there are no closed linear $G$-invariant subspaces $\{0\}\varsubsetneq W\varsubsetneq Y$ nor superspaces $Y\varsubsetneq W\varsubsetneq X$, 
\item[(ii)] there is no closed linear $G$-invariant complement of $Y$ in $X$.
\end{itemize}
Then the mapping $T|_Y\mapsto \dot T$ is well-defined and provides an isomorphism between the topological groups  
$(G|_Y,\mathtt{sot})$ and $(\dot G,\mathtt{sot})$.
\end{cor}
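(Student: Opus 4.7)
The plan is simply to combine the two preceding results, namely Theorem \ref{sot isom} and Corollary \ref{isom general setting}, each of which handles one of the two directions of restriction/quotient under one of the two invariance hypotheses. Under the combined hypotheses of the corollary, both of these previous statements apply to $X$, $Y$ and $G$.

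First, by Corollary \ref{isom general setting}, which uses hypothesis (i) concerning the absence of closed linear $G$-invariant superspaces $Y\varsubsetneq W\varsubsetneq X$ together with hypothesis (ii), the map $\Phi\colon (G,\mathtt{sot})\til (G|_Y,\mathtt{sot})$ defined by $\Phi(T)=T|_Y$ is an isomorphism of topological groups. Independently, by Theorem \ref{sot isom}, which uses hypothesis (i) concerning the absence of proper non-trivial closed linear $G$-invariant subspaces $\{0\}\varsubsetneq W\varsubsetneq Y$ together with hypothesis (ii), the map $\Psi\colon (G,\mathtt{sot})\til (\dot G,\mathtt{sot})$ defined by $\Psi(T)=\dot T$ is also an isomorphism of topological groups.

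Now, since $\Phi$ is a bijection, the assignment $T|_Y\mapsto \dot T$ is well-defined; indeed, if $T_1|_Y=T_2|_Y$ for $T_1,T_2\in G$, then by injectivity of $\Phi$ we have $T_1=T_2$ and hence $\dot T_1=\dot T_2$. Moreover, this assignment is precisely the composition $\Psi\circ \Phi^{-1}\colon (G|_Y,\mathtt{sot})\til (\dot G,\mathtt{sot})$, which is a composition of isomorphisms of topological groups and is therefore itself an isomorphism of topological groups, as required.

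There is no real obstacle here, as the content of the statement has been absorbed into the two previous results; the only thing to verify is that the hypotheses of the corollary imply the hypotheses of both Theorem \ref{sot isom} and Corollary \ref{isom general setting}, which is immediate since hypothesis (i) of the corollary is the conjunction of hypothesis (i) in each of those two results and hypothesis (ii) is common to both.
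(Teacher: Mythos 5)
Your proof is correct and is exactly the intended argument: the paper states this corollary without proof precisely because it is the composition of the isomorphisms $T\mapsto T|_Y$ from Corollary \ref{isom general setting} and $T\mapsto \dot T$ from Theorem \ref{sot isom}, whose hypotheses are jointly subsumed by (i) and (ii). Your verification of well-definedness via injectivity of $T\mapsto T|_Y$ is the only point needing mention, and you handle it correctly.
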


Now, returning to our original assumptions, we suppose that $X$ is a separable reflexive Banach space and $G\leqslant {\rm GL}(X)$ is a bounded subgroup preserving a closed linear subspace $Y\subseteq X$. Suppose furthermore that $Y$ is complemented in $X$, i.e., that we may write $X=Y\oplus Z$ for some closed linear subspace $Z\subseteq X$. Since $Y$ is $G$-invariant, with respect to the decomposition $X=Y\oplus Z$, every element $T\in G$ may be represented by a block matrix
$$
\begin{pmatrix} u_T & w_T \\ 0 & v_T \end{pmatrix},
$$
where $u_T\in {\rm GL}(Y)$, $v_T\in {\rm GL}(Z)$ and $w_T$ is a bounded linear operator from $Z$ to $Y$. Also, $u_T$ is simply the restriction $T|_Y$. Moreover, the projection map $\pi\colon X\til X/Y$ restricts to an isomorphism between $Z$ and $X/Y$ and we note that the operator $\dot T\in {\rm GL}(X/Y)$ is conjugate to $v_T$ via this isomorphism. So henceforth, we shall simply identify $X/Y$ with $Z$ and $\dot T$ with $v_T$.

By Lemma \ref{existence of derivation}, every element of $G$ is represented by a block matrix
$$
\begin{pmatrix} u & \delta(u,v) \\ 0 & v \end{pmatrix},
$$
where $\delta(u,v)\colon Z\til Y$ is a bounded linear operator uniquely determined as a function of $u$ and $v$. As
$$
\begin{pmatrix} u_1 & \delta(u_1,v_1) \\ 0 & v_1 \end{pmatrix}
\begin{pmatrix} u_2 & \delta(u_2,v_2) \\ 0 & v_2 \end{pmatrix}
=\begin{pmatrix} u_1u_2 & u_1\delta(u_2,v_2)+\delta(u_1,v_1)v_2 \\ 0 & v_1v_2 \end{pmatrix},
$$
we find that 
$$
\delta(u_1u_2,v_1v_2)=u_1\delta(u_2,v_2)+\delta(u_1,v_1)v_2.
$$

\begin{lemma}\label{repofderivation}
Let $X=Y\oplus Z$ be separable reflexive and $G\leqslant {\rm GL}(X)$ a bounded subgroup leaving $Y$ invariant. Then there is a continuous homogeneous map $\psi\colon Z\til Y$ so that
$$
\delta(u,v)=u\psi-\psi v.
$$
\end{lemma}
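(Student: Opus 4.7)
The plan is to reuse the continuous homogeneous $G$-equivariant lifting $\phi\colon X/Y\til X$ of the quotient map $\pi$ already constructed in Lemma \ref{lifting}, and to read off $\psi$ as its $Y$-component. Since $Y$ is complemented by $Z$, the projection $\pi$ restricts to a linear isomorphism $Z\til X/Y$, and I identify $X/Y$ with $Z$ via this isomorphism throughout. Under that identification, as noted in the paragraph immediately preceding the lemma, the induced operator $\dot T$ corresponds simply to the block $v$.

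Once this identification is in place, the lifting condition $\pi\circ\phi=\Id$ forces $\phi(z)$ to sit in the affine fibre $(0,z)+(Y\oplus\{0\})$ inside $Y\oplus Z$, so there is a unique map $\psi\colon Z\til Y$ with
$$
\phi(z)=(\psi(z),\,z).
$$
Continuity and homogeneity of $\psi$ are inherited directly from the corresponding properties of $\phi$, and $\psi$ will be the map announced by the lemma.

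To finish I only need to expand the $G$-equivariance $\phi\circ\dot T=T\circ\phi$ in block form. On the one hand,
$$
T\phi(z)=\begin{pmatrix} u & \delta(u,v) \\ 0 & v \end{pmatrix}\begin{pmatrix} \psi(z) \\ z \end{pmatrix}=\begin{pmatrix} u\psi(z)+\delta(u,v)z \\ vz \end{pmatrix},
$$
while on the other hand
$$
\phi(\dot T z)=\phi(vz)=(\psi(vz),\,vz).
$$
Matching first coordinates yields $\psi(vz)=u\psi(z)+\delta(u,v)z$ for every $z\in Z$, which rearranges to $\delta(u,v)=u\psi-\psi v$, as required.

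No serious obstacle is anticipated, since all the analytic content has been packaged into Lemma \ref{lifting}: the existence of a continuous, homogeneous, $G$-equivariant lifting is exactly what produces $\psi$, and the cocycle identity for $\delta$ noted just before the lemma is automatic from the equivariance computation above. The only point requiring care is the bookkeeping of the identification $X/Y\cong Z$ and the corresponding identification of $\dot T$ with $v$; once those conventions are fixed, the computation is mechanical.
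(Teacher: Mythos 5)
Your argument is the same as the paper's: take the continuous homogeneous $G$-equivariant lifting $\phi$ from Lemma \ref{lifting}, identify $X/Y$ with $Z$ and $\dot T$ with $v$, write $\phi(z)$ in block form, and expand the equivariance $\phi v=T\phi$. The only flaw is a sign slip at the end: with your convention $\phi(z)=(\psi(z),z)$, the identity $\psi(vz)=u\psi(z)+\delta(u,v)z$ rearranges to $\delta(u,v)=\psi v-u\psi$, not $u\psi-\psi v$. This is harmless: simply replace $\psi$ by $-\psi$ (the paper writes $\phi(z)=(-\psi(z),z)$ from the start), which preserves continuity and homogeneity and yields the stated formula.
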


\begin{proof}
Suppose that $\phi\colon Z\til X$ is the $G$-equivariant continuous homogeneous  lifting of the canonical projection $\pi\colon Y\oplus Z\til Z$ given by Lemma \ref{lifting}. Then we may write
$$
\phi(z)=\begin{pmatrix} -\psi(z)  \\ z \end{pmatrix}
$$
for some continuous homogeneous $\psi\colon Z\til Y$. Now, for 
$$
T=\begin{pmatrix} u & \delta(u,v) \\ 0 & v \end{pmatrix}\in G,
$$
we have, by the $G$-equivariance  of $\phi$ and the identification $\dot T=v$, that $\phi v=T\phi$, i.e.,
$$
\begin{pmatrix} -\psi v(z)  \\ v(z) \end{pmatrix}
=\phi v(z)=T\phi(z)=\begin{pmatrix} u & \delta(u,v) \\ 0 & v \end{pmatrix}
\begin{pmatrix} -\psi(z)  \\ z \end{pmatrix}
=\begin{pmatrix} -u\psi(z)+\delta(u,v)(z)  \\v( z) \end{pmatrix}
$$
for all $z\in Z$. In other words, $-u\psi+\delta(u,v)=-\psi v$ or equivalently $\delta(u,v)=u\psi-\psi v$.
\end{proof}

Now, if $G\leqslant {\rm GL}(Y\oplus Z)$ a bounded subgroup leaving $Y$ invariant, we set
$$
U=\{u\in{\rm GL}(Y)\del \e v\in {\rm GL}(Z) \begin{pmatrix} u & \delta(u,v) \\ 0 & v \end{pmatrix}\in G\}
$$
and
$$
V=\{v\in{\rm GL}(Z)\del \e u\in {\rm GL}(Y) \begin{pmatrix} u & \delta(u,v) \\ 0 & v \end{pmatrix}\in G\}
$$
and note that these are bounded subgroups of ${\rm GL}(Y)$ and ${\rm GL}(Z)$ respectively.

Specialising Corollary \ref{isom general setting} to the setting above, we obtain the following.
\begin{thm}\label{theoreme10}
Let $X=Y\oplus Z$ be separable reflexive and $G\leqslant {\rm GL}(X)$ a bounded subgroup leaving $Y$ invariant. Assume that 
\begin{itemize}
\item[(i)] there are no closed linear $G$-invariant subspaces $\{0\}\varsubsetneq W\varsubsetneq Y$ nor superspaces $Y\varsubsetneq W\varsubsetneq X$, 
\item[(ii)] there is no closed linear $G$-invariant complement of $Y$ in $X$.
\end{itemize}
Then the mappings
$$
\begin{pmatrix} u & \delta(u,v) \\ 0 & v \end{pmatrix}\mapsto u
$$
and 
$$
\begin{pmatrix} u & \delta(u,v) \\ 0 & v \end{pmatrix}\mapsto v
$$
are topological group isomorphisms between $(G, \mathtt{sot})$ and $(U,\mathtt{sot})$, respectively $(G, \mathtt{sot})$ and $(V,\mathtt{sot})$.
\end{thm}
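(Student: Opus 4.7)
The plan is to derive Theorem \ref{theoreme10} as a direct specialisation of the two earlier results, Theorem \ref{sot isom} and Corollary \ref{isom general setting}, to the present block matrix setting, using the identifications already established in the paragraph preceding Lemma \ref{repofderivation}.

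First I would recall that, once $X=Y\oplus Z$ and every $T\in G$ is represented as $\begin{pmatrix} u & \delta(u,v) \\ 0 & v \end{pmatrix}$, the diagonal block $u$ is literally the restriction $T|_Y$, while $v$ corresponds to the induced operator $\dot T\in {\rm GL}(X/Y)$ via the canonical topological linear isomorphism $X/Y\simeq Z$ afforded by the projection $\pi$. Under these identifications, the group $U\leqslant {\rm GL}(Y)$ defined above is nothing but $G|_Y=\{T|_Y\del T\in G\}$, and likewise $V\leqslant {\rm GL}(Z)$ coincides with $\dot G=\{\dot T\del T\in G\}$. In particular, the associations $T\mapsto u$ and $T\mapsto v$ are group homomorphisms, by the block multiplication identity recorded just before Lemma \ref{repofderivation}.

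Next I would unpack the hypotheses of Theorem \ref{theoreme10} and observe that they have been deliberately bundled so that both earlier structural results simultaneously apply. Indeed, the absence of any closed linear $G$-invariant subspace $\{0\}\varsubsetneq W\varsubsetneq Y$, together with (ii), is exactly the hypothesis of Theorem \ref{sot isom}; this theorem yields that $T\mapsto \dot T$ is an isomorphism $(G, \mathtt{sot})\til (\dot G,\mathtt{sot})$, which under the identification $\dot T=v$ says precisely that $T\mapsto v$ is a topological group isomorphism $(G,\mathtt{sot})\til (V,\mathtt{sot})$. Symmetrically, the absence of any closed linear $G$-invariant superspace $Y\varsubsetneq W\varsubsetneq X$, together with (ii), is exactly the hypothesis of Corollary \ref{isom general setting}; this corollary then yields that $T\mapsto T|_Y$ is a topological group isomorphism $(G,\mathtt{sot})\til (G|_Y, \mathtt{sot})=(U,\mathtt{sot})$.

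There is no genuine obstacle beyond bookkeeping: both homomorphisms are $\mathtt{sot}$-continuous automatically (as coordinate projections in the block representation), and their bicontinuity is supplied directly by the two quoted results. The only point worth double-checking is the identification $\dot T=v$ under $\pi|_Z\colon Z\til X/Y$, since this is what translates the conclusion of Theorem \ref{sot isom}, phrased in terms of $\dot G$, into the desired statement about $V$; but this identification is already made explicit in the discussion preceding Lemma \ref{repofderivation}.
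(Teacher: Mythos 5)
Your proposal is correct and follows essentially the same route as the paper: the paper obtains Theorem \ref{theoreme10} precisely by specialising the earlier results (Theorem \ref{sot isom} for the $v$-entry via the identification $\dot T=v$ under $\pi|_Z\colon Z\til X/Y$, and Corollary \ref{isom general setting} for the $u$-entry $T|_Y$) to the split setting $X=Y\oplus Z$, exactly as you do. Your explicit splitting of hypothesis (i) into the part needed for each of the two quoted results, and the identifications $U=G|_Y$, $V\simeq\dot G$, is the same bookkeeping the paper leaves implicit.
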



\section{Derivations and non-unitarisable representations}\label{hilbert case}
In this section, we apply our results from Section \ref{bounded} to the special case of Hilbert space, that is, we assume that $Y=Z=\ku H$, where $\ku H$ is the separable infinite-dimensional Hilbert space.  For this, we shall briefly review how to twist a unitary representation to obtain a non-unitarisable bounded representation.

So suppose that $\lambda\colon \Gamma\til \ku U(\ku H)$ is a unitary representation of a group $\Gamma$ on a separable infinite-dimensional Hilbert space $\ku H$.  A {\em derivation} associated to $\lambda$ is a map $d\colon \Gamma\til \ku B(\ku H)$, where $\ku B(\ku H)$ is the algebra of bounded linear operators on $\ku H$, satisfying the cocycle equation
$$
d(ab)=\lambda(a)d(b)+d(a)\lambda(b)
$$
for all $a,b\in \Gamma$.  Letting $\ku H_1$ and $\ku H_2$ denote two copies of $\ku H$,  this equation is simply equivalent to the requirement that the map $\lambda_{d}\colon \Gamma\til {\rm GL}(\ku H_1\oplus \ku H_2)$ given by
$$
\lambda_d(a)= \begin{pmatrix} \lambda(a) & d(a) \\ 0 & \lambda(a) \end{pmatrix}
$$
defines a representation, i.e., $\lambda_d(ab)=\lambda_d(a)\lambda_d(b)$. Moreover, this representation is bounded if and only if $d$ is {\em bounded}, i.e., $\sup_{a\in \Gamma}\norm{d(a)}<\infty$.

Now, as is well-known, the following statements are equivalent for a bounded derivation $d$ associated to $\lambda$. 
\begin{enumerate}
\item[(i)] $\lambda_{d}\colon \Gamma\til {\rm GL}(\ku H_1\oplus \ku H_2)$ is unitarisable,
\item[(ii)] $d$ is {\em inner}, that is, there is a bounded linear operator $L\in \ku B(\ku H)$ with $d(a)=\lambda(a)L-L\lambda(a)$,
\item[(iii)] there is a closed linear complement $\ku K$ of $\ku H_1$ in $\ku H_1\oplus \ku H_2$ invariant under the representation $\lambda_{d}\colon \Gamma\til {\rm GL}(\ku H_1\oplus \ku H_2)$.
\end{enumerate}

To see this, suppose first that $\ku K$ is a closed linear $\lambda_{d}$-invariant complement of $\ku H_1$ in $\ku H_1\oplus \ku H_2$ and let $P$ be the projection onto $\ku H_1$ along $\ku K$.  By the $\lambda_{d}$-invariance of $\ku K$, $P$ commutes with $\lambda_{d}(a)$ for all $a\in \Gamma$. So, viewing $d(a)$ as an operator from $\ku H_2$ to $\ku H_1$, for  all $x\in \ku H_2$, we have
$$
d(a)x+P\lambda(a)x=Pd(a)x+P\lambda(a)x=P\lambda_d(a)x= \lambda_d(a)Px=\lambda(a)Px,
$$
i.e., $d(a)x=  \lambda(a)Px-P\lambda(a)x$. Letting $L$ be the restriction of $P$ to $\ku H_2$, we thus find that $d(a)=\lambda(a)L-L\lambda(a)$ for all $a\in \Gamma$ and therefore $d$ is an inner derivation.

Also, if $d$ is inner and thus $d(a)=\lambda(a)L-L\lambda(a)$ for some bounded operator $L$, then, for all $a\in \Gamma$, we have
$$
\begin{pmatrix} \lambda(a) & d(a) \\ 0 & \lambda(a) \end{pmatrix} = 
\begin{pmatrix} \Id & -L \\ 0 & \Id \end{pmatrix}  
\begin{pmatrix} \lambda(a) & 0 \\ 0 & \lambda(a) \end{pmatrix}  
\begin{pmatrix} \Id & L \\ 0 & \Id \end{pmatrix}, 
$$
which shows that $\lambda_d\colon \Gamma\til {\rm GL}(\ku H_1\oplus \ku H_2)$ is similar to a block diagonal unitary representation and hence is unitarisable.

Finally, by the complete reducibility of unitary representations, if the representation $\lambda_d\colon \Gamma\til {\rm GL}(\ku H_1\oplus \ku H_2)$ is unitarisable, the $\lambda_d$-invariant subspace $\ku H_1$ has a $\lambda_d$-invariant  complement $\ku K$ in $\ku H_1\oplus \ku H_2$.

\begin{thm}\label{hilbert}
Suppose that $\lambda\colon \Gamma\til \ku U(\ku H)$ is an irreducible unitary representation of a group $\Gamma$ on a separable infinite-dimensional Hilbert space $\ku H$ and $d\colon \Gamma\til \ku B(\ku H)$ is an associated non-inner bounded derivation.
Let also $\ku H_1$ and $\ku H_2$ be distinct copies of $\ku H$ and suppose that $G \leqslant GL(\ku H_1 \oplus \ku H_2)$ is a bounded subgroup leaving $\ku H_1$ invariant and containing $\lambda_d[\Gamma]$. Then the mappings $G\til {\rm GL}(\ku H)$ defined by
$$
\begin{pmatrix} u & \delta(u,v) \\ 0 & v \end{pmatrix}\mapsto u
$$
and 
$$
\begin{pmatrix} u & \delta(u,v) \\ 0 & v \end{pmatrix}\mapsto v
$$
are topological group isomorphisms between $(G, \mathtt{sot})$ and $(U,\mathtt{sot})$, respectively $(G, \mathtt{sot})$ and $(V,\mathtt{sot})$.
\end{thm}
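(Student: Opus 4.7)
The plan is to reduce Theorem~\ref{hilbert} to the general result Theorem~\ref{theoreme10} applied with $Y=\ku H_1$, $Z=\ku H_2$, and $X=\ku H_1\oplus\ku H_2$. Concretely, I would check that the hypotheses (i) and (ii) of Theorem~\ref{theoreme10} are forced by the irreducibility of $\lambda$ and the non-innerness of $d$, and then invoke that theorem verbatim.

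First I would verify hypothesis (i), in two parts. For subspaces of $Y=\ku H_1$: any closed $G$-invariant subspace $W\subseteq\ku H_1$ is in particular invariant under the restriction of $G$ to $\ku H_1$; but since $G$ contains $\lambda_d[\Gamma]$ and $\lambda_d(a)|_{\ku H_1}=\lambda(a)$, it follows that $W$ is $\lambda[\Gamma]$-invariant. Irreducibility of $\lambda$ then forces $W\in\{0,\ku H_1\}$. For superspaces $\ku H_1\varsubsetneq W\varsubsetneq X$: such a $W$ corresponds, via the quotient map $\pi\colon X\to X/\ku H_1\cong\ku H_2$, to a closed $\dot G$-invariant subspace $\pi(W)\subseteq\ku H_2$. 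Since the induced operator $\dot{\lambda_d(a)}$ on $X/\ku H_1$ is identified with $\lambda(a)$ on $\ku H_2$, the group $\dot G$ contains $\lambda[\Gamma]$, and again irreducibility forces $\pi(W)\in\{0,\ku H_2\}$, contradicting $W\varsubsetneq X$ or $\ku H_1\varsubsetneq W$.

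Next, for hypothesis (ii), suppose for contradiction that $\ku K\subseteq X$ is a closed linear $G$-invariant complement of $\ku H_1$. Then in particular $\ku K$ is $\lambda_d[\Gamma]$-invariant, i.e.\ condition (iii) of the equivalence recalled right before Theorem~\ref{hilbert} holds. By the implication (iii)$\Rightarrow$(ii) recalled there (spelled out via the projection onto $\ku H_1$ along $\ku K$), the derivation $d$ would be inner, contradicting our standing hypothesis. Hence no such complement exists.

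With (i) and (ii) verified, Theorem~\ref{theoreme10} applies directly and yields the two $\mathtt{sot}$-isomorphisms
$$
\begin{pmatrix} u & \delta(u,v) \\ 0 & v \end{pmatrix}\mapsto u
\qquad\text{and}\qquad
\begin{pmatrix} u & \delta(u,v) \\ 0 & v \end{pmatrix}\mapsto v
$$
between $(G,\mathtt{sot})$ and the respective images $(U,\mathtt{sot})\subseteq{\rm GL}(\ku H)$ and $(V,\mathtt{sot})\subseteq{\rm GL}(\ku H)$. I do not expect any serious obstacle here: all the heavy lifting has been done in Section~\ref{bounded}, and the present argument is really just a bookkeeping check that the abstract hypotheses are equivalent, in this Hilbertian setting, to the conjunction of irreducibility of $\lambda$ and non-innerness of $d$. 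The one small care to take is in identifying $X/\ku H_1$ with $\ku H_2$ in a way that sends $\dot{\lambda_d(a)}$ to $\lambda(a)$, which is immediate from the block form of $\lambda_d(a)$.
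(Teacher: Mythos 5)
Your proposal is correct and follows essentially the same route as the paper: verify hypotheses (i) and (ii) of Theorem~\ref{theoreme10} from irreducibility of $\lambda$ and non-innerness of $d$, then apply that theorem. The only cosmetic difference is in the superspace case, where you pass to the image $\pi(W)$ in the quotient $X/\ku H_1\cong\ku H_2$ (which is indeed closed and $\lambda[\Gamma]$-invariant since $W\supseteq\ker\pi$), whereas the paper equivalently shows $W\cap\ku H_2$ is $\lambda$-invariant; both immediately yield the contradiction.
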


\begin{proof}  
First, by irreducibility of $\lambda$, there is no closed linear $\lambda_d$-invariant subspace $\{0\}\varsubsetneq \ku K\varsubsetneq \ku H_1$. Also, since $d$ is not inner, there is no $\lambda_d$-invariant closed linear complement of $\ku H_1$ in $\ku H_1\oplus \ku H_2$.

We also claim that  there is no closed linear $\lambda_d$-invariant superspace $\ku H_1\varsubsetneq\ku K\varsubsetneq \ku H_1\oplus \ku H_2$, as otherwise, $ \ku K\cap \ku H_2$ would be $\lambda$-invariant, contradicting the irreducibility of $\lambda$. Indeed, suppose that $x\in \ku K\cap \ku H_2$. Then $\lambda_d(a)x=d(a)x+\lambda(a)x\in \ku K$, whereby, as $d(a)x\in \ku H_1\subseteq \ku K$ and $\lambda(a)x\in \ku H_2$, also $\lambda(a)x\in \ku K\cap \ku H_2$.

Now, since $G$ contains $\lambda_d[\Gamma]$, it follows that there are no $G$-invariant subspaces of the above type. Therefore, $G$ satisfies the conditions of Theorem \ref{theoreme10}, whereby our result follows.
\end{proof}



\section{The representation of ${\rm Aut}(T)$ on $\ell_2(T)$}\label{Aut(T)}

In the following, we let $T$ denote the $\aleph_0$-regular tree, that is, the countable connected, symmetric, irreflexive graph without loops in which every vertex has infinite valence. One particular realisation of $T$ is as the Cayley graph of the free group on a denumerable set of generators, $\F_\infty$, with respect to its free generating set. We also let $\lambda$ denote the unitary representation of its automorphism group, $G={\rm Aut}(T)$, on the vector space $\C^T$ of $\C$-valued functions on $T$ given by
$$
\lambda(g)(x)=x(g\inv \,\cdot),
$$
for $g\in G$ and $x\in \C^T$, and note that the linear subspaces $\ell_p(T)$, $1\leqslant p\leqslant \infty$, and $c_0(T)\subseteq \C^T$ are $\lambda(G)$-invariant. The same holds for the space $c_{00}(T)$ of finitely supported functions. Let also $G_t={\rm Aut}(T,t)$ denote the isotropy subgroup  of the vertex $t\in T$ and, for a subset $A\subseteq T$, let $\C^A$ and $\ell_p(A)$ denote the subspaces of vectors whose support is included in $A$. We set $\1_t\in \C^T$ to be the Dirac function at the vertex $t\in T$.

We begin by two elementary observations that will be significantly strengthened later on.
\begin{prop}\label{irred}
The unitary representation $\lambda\colon G\til \ku U(\ell_2(T))$ is irreducible.
\end{prop}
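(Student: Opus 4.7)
The plan is to use a direct Schur-type argument based on the action of the stabiliser $G_e$ on $T$. Let $K \subseteq \ell_2(T)$ be a non-zero closed $\lambda(G)$-invariant subspace with orthogonal projection $P$, so that $P$ commutes with every $\lambda(g)$. I will show that $\mathbf{1}_e \in K$, which by $G$-transitivity on $T$ yields $\mathbf{1}_t \in K$ for every $t$, and hence $K = \ell_2(T)$ since the $\mathbf{1}_t$ span a dense subspace.

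The key step is to describe the $G_e$-invariant vectors in $\ell_2(T)$. Since $\lambda(g)\mathbf{1}_e = \mathbf{1}_e$ for $g \in G_e$, the vector $P\mathbf{1}_e$ is fixed by every $\lambda(g)$, $g \in G_e$, hence is constant on each $G_e$-orbit in $T$. The main observation is that, because $T$ is $\aleph_0$-regular, $G_e$ acts transitively on each sphere $S_n(e) = \{t \in T \del d(e,t) = n\}$: indeed, any bijection between two geodesic segments of length $n$ starting at $e$ extends to an automorphism of $T$ fixing $e$ by the standard back-and-forth/extension argument for $\aleph_0$-homogeneous trees. Since $|S_n(e)| = \aleph_0$ for $n \geqslant 1$, the only $\ell_2$-function constant on every sphere $S_n(e)$ is $c\cdot \mathbf{1}_e$ for some $c \in \mathbb{C}$.

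Thus $P\mathbf{1}_e = c\mathbf{1}_e$ for some scalar $c$. If $c = 0$, then by $G$-equivariance and transitivity of $G$ on $T$ we get $P\mathbf{1}_t = 0$ for every $t \in T$, so $P = 0$ and $K = \{0\}$, contradicting the hypothesis. Hence $c \neq 0$, whereby $\mathbf{1}_e = c^{-1} P\mathbf{1}_e \in K$, and the argument concludes as sketched above.

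The only real point requiring a moment of care is the transitivity of $G_e$ on the sphere $S_n(e)$; but this is immediate from the fact that every finite partial automorphism of $T$ fixing $e$ extends to a full automorphism, which in turn uses only that every vertex has infinite valence.
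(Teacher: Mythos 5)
Your proof is correct and follows essentially the same route as the paper: both arguments identify the $\lambda(G_e)$-invariant vectors as $\C\1_e$ (you via transitivity of $G_e$ on spheres, the paper via the weaker fact that every vertex $s\neq e$ has infinite $G_e$-orbit), then use that the projection onto an invariant subspace commutes with $\lambda(G)$ to force $P\1_e\in\C\1_e$, and conclude by vertex-transitivity and density of the span of $\{\1_t\}_{t\in T}$. Your case split $c=0$ versus $c\neq 0$ is just a reorganisation of the paper's dichotomy $\1_t\in\ku H^\perp$ or $\1_t\in\ku H$.
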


\begin{proof}
Note that, since every vertex $s\neq t$ has infinite orbit in $T$ under the action of $G_t$,  $\C\1_t\subseteq \ell_2(T)$ is the $1$-dimensional subspace of $\lambda(G_t)$-invariant vectors. Now, if $\ell_2(T)=\ku H\oplus \ku H^\perp$ were a $\lambda(G)$-invariant decomposition of $\ell_2(T)$, the orthogonal projection $P$ onto $\ku H$ would commute with $\lambda(G)$ and so, in particular, $\lambda(g)P\1_t=P\lambda(g)\1_t=P\1_t$ for all $g\in G_t$, i.e., $P\1_t$ is $\lambda(G_t)$-invariant. It follows that $P\1_t\in \C\1_t\cap \ku H$ and so either  $\1_t\in \ku H^\perp$ or $\1_t\in \ku H$.
But, as $\lambda(G)\1_t$ spans $\ell_2(T)$, we see by the invariance of $\ku H^\perp$ and $\ku H$ that either $\ku H^\perp=\ell_2(T)$ or $\ku H=\ell_2(T)$, showing irreducibility.
\end{proof}

\begin{prop}\label{inner product}
$\lambda\colon G\til \ku U(\ell_2(T))$ is {\em uniquely unitarisable}, i.e., up to a scalar multiple, there is a unique $\lambda$-invariant inner product on $\ell_2(T)$ equivalent with the usual inner product.
\end{prop}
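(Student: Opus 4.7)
The plan is to convert the uniqueness-of-inner-product question into a commutation statement and then invoke Schur's lemma via the irreducibility of $\lambda$ established in Proposition \ref{irred}. Given any $\lambda$-invariant inner product $B(\cdot,\cdot)$ on $\ell_2(T)$ equivalent with the canonical inner product $\langle \cdot,\cdot\rangle$, the Riesz representation theorem (applied to the bounded sesquilinear form $B$) yields a bounded, positive, invertible operator $A \in \ku B(\ell_2(T))$, self-adjoint with respect to $\langle\cdot,\cdot\rangle$, such that $B(x,y) = \langle Ax, y\rangle$ for all $x,y \in \ell_2(T)$.

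Next, I would translate $\lambda$-invariance of $B$ into commutation of $A$ with the representation. Since each $\lambda(g)$ is unitary for $\langle\cdot,\cdot\rangle$, the identity $B(\lambda(g)x, \lambda(g)y) = B(x,y)$ rewrites as $\langle \lambda(g)^* A \lambda(g) x, y\rangle = \langle Ax, y\rangle$ for all $x,y$, hence $\lambda(g)^{-1} A \lambda(g) = A$ for every $g \in G$. In other words, $A$ lies in the commutant of $\lambda(G)$ inside $\ku B(\ell_2(T))$.

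Finally, Schur's lemma for irreducible unitary representations, combined with Proposition \ref{irred}, implies that this commutant consists exactly of scalar multiples of the identity. Therefore $A = c \cdot \Id$ for some scalar $c$, which must be a positive real by positivity of $A$, and consequently $B = c\langle\cdot,\cdot\rangle$, as required.

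There is no real obstacle at this stage, since the argument is an essentially automatic application of Schur's lemma once irreducibility is in hand. The substantial strengthening will come later in Theorem \ref{commutant intro}: by computing the commutant of $\lambda(G)$ as a set of linear maps from $\ell_p(T)$ into the much larger ambient space $\C^T$, one can drop the equivalence hypothesis entirely and conclude that \emph{every} $\lambda$-invariant inner product on $\ell_2(T)$ is a positive scalar multiple of $\langle\cdot,\cdot\rangle$.
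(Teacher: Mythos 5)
Your argument is correct: equivalence of the two inner products makes $B$ a bounded coercive Hermitian form, Riesz representation gives a bounded, self-adjoint, positive, invertible $A$ with $B(x,y)=\langle Ax,y\rangle$, invariance of $B$ plus unitarity of each $\lambda(g)$ yields $A\lambda(g)=\lambda(g)A$, and then Schur's lemma (or, more elementarily, the observation that the spectral projections of the self-adjoint $A$ commute with $\lambda(G)$ and hence have trivial range by Proposition \ref{irred}) forces $A=c\,\Id$ with $c>0$. However, this is a genuinely different route from the paper's. The paper does not pass through the commutant in $\ku B(\ell_2(T))$ at this stage; instead it fixes $s\in T$, chooses $g_n\in G_s$ shifting the neighbours of $s$ so that $\lambda(g_n)\Lim{wot}P_s$, the rank-one orthogonal projection onto $\C\1_s$, and deduces that $\ell_2(T\setminus\{s\})$ is the \emph{unique} closed $\lambda(G_s)$-invariant complement of $\C\1_s$. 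Consequently, for any equivalent invariant inner product the $\perp'$-complement of $\C\1_s$ must be $\ell_2(T\setminus\{s\})$, so the $\1_t$ are pairwise $\perp'$-orthogonal, and transitivity of $\lambda(G)$ on $\{\1_t\}_{t\in T}$ fixes the inner product up to a positive scalar. Your approach is shorter and entirely standard, but it leans on boundedness and invertibility of $A$, i.e., on the equivalence hypothesis; the paper's more hands-on argument is chosen precisely because it is the template for the later strengthenings (Lemma \ref{commutant} and Theorem \ref{unique inner product}), where the invariant complement is only assumed linear, the relevant maps need not be bounded operators on $\ell_2(T)$, and Schur's lemma is no longer available -- exactly the point you flag at the end of your proposal.
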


\begin{proof}
Fix $s\in T$ and enumerate the neighbours of $s$ in $T$ as $\{\ldots,t_1,t_0,t_1,\ldots\}$.  Pick a sequence of automorphisms $g_1,g_2,g_3,\ldots\in G_s$ so that $g_n(t_i)=t_{i+n}$ for all $n\geqslant 1$ and $i\in \Z$. Then $\lambda(g_n)\Lim{wot}P_s$, where $P_s$ denotes the usual orthogonal projection onto $\C\1_s$. 

Note  that $\ell_2(T\setminus \{s\})$ is a closed linear $\lambda(G_s)$-invariant complement of $\C\1_s$. On the other hand, if $\ku H\subseteq \ell_2(T)$ is any other closed linear  $\lambda(G_s)$-invariant complement of $\C\1_s$, then $P_sx=w\!-\!\lim \lambda(g_s)x$ belongs to $\C\1_s\cap \ku H=\{0\}$ for all $x\in \ku H$, whereby $\ku H\subseteq \ell_2(T\setminus \{s\})$ and hence $\ku H=\ell_2(T\setminus \{s\})$. It follows that $\ell_2(T\setminus \{s\})$ is the unique $\lambda(G_s)$-invariant closed linear complement of $\C\1_s$ in $\ell_2(T)$.

Now, suppose $\langle\cdot |\cdot \rangle$ denotes the usual inner product on $\ell_2(T)$ and $\langle\cdot |\cdot \rangle'$ is another $\lambda(G)$-invariant equivalent inner product on $\ell_2(T)$. Then, for every $s\in T$,  the orthogonal complement $(\C{\bf 1}_s)^{\perp'}$ is a closed linear $\lambda(G_s)$-invariant complement of $\C\1_s$, so $(\C{\bf 1}_s)^{\perp'}=\ell_2(T\setminus\{s\})=(\C{\bf 1}_s)^\perp$, whereby $\langle {\bf 1}_s| {\bf 1}_t\rangle'=0$ for all $s\neq t$ in $T$.

Since $\lambda(G)$ acts transitively on $\{ {\bf 1}_s\}_{s\in T}$, we also see that $\langle {\bf 1}_s|{\bf 1}_s\rangle'=\langle {\bf 1}_t|{\bf 1}_t\rangle'$ for all $s,t\in T$. So, up to multiplication by a scalar, we have  $\langle\cdot |\cdot \rangle=\langle\cdot |\cdot \rangle'$.
\end{proof}

We shall now significantly improve the preceding two results by removing any assumptions of continuity.

\begin{lemma}
For all $t\in T$ and $1\leqslant p<\infty$, there are no almost $\lambda(G_t)$-invariant unit vectors in $\ell_p(T\setminus\{t\})$.
\end{lemma}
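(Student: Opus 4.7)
The plan is to argue by contradiction: transfer almost invariance from $\ell_p$ down to $\ell_1$, push the resulting probability measures forward to the set of neighbours of $t$, and there invoke the fact that $G_t$ realises the full symmetric group on this set to derive a contradiction via a partition argument.

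First, I would reduce to $p=1$. Suppose $(x_k)$ is a sequence of unit vectors in $\ell_p(T\setminus\{t\})$ with $\|\lambda(g)x_k-x_k\|_p\to 0$ for every $g\in G_t$. Replacing $x_k$ with $|x_k|$ (which is still almost invariant since $\big||a|-|b|\big|\leqslant|a-b|$) and setting $y_k=|x_k|^p$, the pointwise bound $\big||a|^p-|b|^p\big|\leqslant p\big(\max(|a|,|b|)\big)^{p-1}\big||a|-|b|\big|$ combined with H\"older's inequality yields $\|\lambda(g)y_k-y_k\|_1\leqslant p\cdot 2^{p-1}\|\lambda(g)x_k-x_k\|_p\to 0$. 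Thus $(y_k)$ is a sequence of almost $G_t$-invariant probability measures on $T\setminus\{t\}$.

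Next, writing $S$ for the set of neighbours of $t$, I would consider the $G_t$-equivariant retraction $\pi\colon T\setminus\{t\}\to S$ sending each $s$ to the unique neighbour of $t$ on the geodesic from $t$ to $s$. Its pushforward $\pi_\ast\colon\ell_1(T\setminus\{t\})\to\ell_1(S)$ defined by $(\pi_\ast y)(s)=\sum_{u\in\pi^{-1}(s)}y(u)$ is a $G_t$-equivariant contraction preserving mass on nonnegative vectors, so $z_k:=\pi_\ast y_k$ is still a sequence of almost $G_t$-invariant probability measures, now on the countably infinite set $S$. The key structural observation is that, since the subtrees of $T\setminus\{t\}$ hanging off distinct elements of $S$ are pairwise isomorphic rooted trees, \emph{every} bijection of $S$ extends to an element of $G_t$. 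Hence the $G_t$-action on $S$ realises the full symmetric group $\operatorname{Sym}(S)$, and $(z_k)$ is in fact almost $\operatorname{Sym}(S)$-invariant in $\ell_1(S)$.

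The final step is to take a weak-$\ast$ cluster point $m\in\ell_\infty(S)^\ast$ of $(z_k)$; positivity of $z_k$ and $\|z_k\|_1=1$ make $m$ a mean on $S$, and the almost invariance makes it $\operatorname{Sym}(S)$-invariant. Since any two subsets of $S$ that are infinite with infinite complement are $\operatorname{Sym}(S)$-conjugate, they must have the same $m$-measure; partitioning $S$ into $k$ infinite pieces and using finite additivity then forces every piece to have $m$-measure $1/k$. Applying this for $k=2$ and $k=3$ forces one and the same infinite-and-co-infinite set to have measure $1/2$ and $1/3$ simultaneously---the desired contradiction. The only real obstacle is the quantitative reduction from $\ell_p$ to $\ell_1$; the remaining steps are purely structural and exploit the homogeneity of the tree.
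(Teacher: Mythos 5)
Your argument is correct in substance but follows a genuinely different route from the paper. The paper stays in $\ell_p$: it identifies the neighbours of $t$ with a non-amenable countable group $\Gamma$ acting by left translation (extended to the isomorphic hanging subtrees, giving the decomposition $T\setminus\{t\}\cong T'\times\Gamma$), invokes the absence of almost invariant vectors for the regular representation of $\Gamma$ on $\ell_p(\Gamma)$, and then spreads this over the fibres $\{s\}\times\Gamma$ by a pigeonhole estimate, producing explicit $g_1,\dots,g_k$ and an explicit constant $\eps/k$. You instead reduce to $\ell_1$ via the Mazur-map inequality, collapse each hanging subtree onto its root by the equivariant mass-preserving pushforward (this collapse is exactly why you need the $\ell_1$/positivity reduction, and why the paper, staying in $\ell_p$, needs the fibrewise pigeonhole instead), and then rule out an invariant mean on the neighbour set $S$ using that $G_t$ induces all of $\operatorname{Sym}(S)$ together with the $2$-piece versus $3$-piece partition trick. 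Your proof is more self-contained (it reproves the relevant non-amenability from scratch rather than quoting the standard fact about $\ell_p(\Gamma)$), at the price of being soft and non-quantitative, whereas the paper's argument directly yields the quantitative form that Lemma \ref{commutant} actually cites.

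One caveat: your contradiction hypothesis, a single sequence $(x_k)$ with $\lVert\lambda(g)x_k-x_k\rVert_p\to 0$ for \emph{every} $g\in G_t$, is not literally the negation of the statement being proved, because $G_t$ is uncountable and no diagonal extraction is available; what must be refuted is the existence, for every finite $F\subseteq G_t$ and $\eps>0$, of an $(F,\eps)$-invariant unit vector. This is easily repaired within your scheme: your final contradiction only uses finitely many permutations of $S$ (one swapping the two pieces of a partition into two infinite sets, one cyclically permuting a partition into three infinite sets, and one matching a piece of the first partition to a piece of the second), each induced by a fixed element of $G_t$; running the contradiction with that specific finite set $F$ and $\eps=1/n$ gives the sequence you need and yields exactly the finite-set formulation used later in the paper.
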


\begin{proof}
Fix a countable non-amenable group $\Gamma$, a vertex $t\in T$ and enumerate the neighbours of $t$ in $T$ by the elements of $\Gamma$. Also, for every $a\in \Gamma$, let $T_a$ denote the subtree of all vertices $s\in T$ whose geodesic to $t$ passes through $a$. So the rooted trees $\{(T_a, a)\}_{a\in \Gamma}$ are all isomorphic to some fixed rooted tree $(T',r)$ and we can therefore identify $T\setminus \{t\}$ with $T'\times \Gamma$ in such a way that each  $T_a$ is identified with $T'\times \{a\}$ via the aforementioned isomorphism. Moreover, if we define an action $\rho\colon  \Gamma\curvearrowright \ell_p(T'\times \Gamma)$ by letting $\Gamma$ shift the second coordinate, it suffices to show that this action does not have almost invariant unit vectors.

To see this, note that, as $\Gamma$ is non-amenable,  the left regular representation $\sigma\colon \Gamma\curvearrowright \ell_p(\Gamma)$ does not have almost invariant unit vectors.  There are therefore  $g_1,\ldots,g_k\in \Gamma$ and $\eps>0$ so that 
\begin{equation}\label{equation1}
\max_{1\leqslant i\leqslant k}\;\norm{x-\sigma(g_i)x}>\eps\norm x
\end{equation}
for every non-zero vector $x\in \ell_p(\Gamma)$. For $s\in T'$, let $P_s\colon \ell_p(T'\times \Gamma)\til \ell_p(\{s\}\times \Gamma)$ denote the canonical projection and note that $P_s$ commutes with the $\rho(g_i)$. Now, fix $0\neq x\in \ell_p(T'\times \Gamma)$, set 
$$
N_i=\big\{s\in T'\del\norm{P_sx-P_s\rho(g_i)x}= \norm{P_sx-\rho(g_i)P_sx}>\eps\norm{P_sx}\big\}
$$
and note that, by (\ref{equation1}), $T'=\bigcup_{1\leqslant i\leqslant k}N_i$. We pick $i$ so that $\big(\sum_{s\in N_i}\norm{P_sx}^p\big)^\frac 1p\geqslant \frac1{k}\norm{x}$ and see that
$$
\norm{x-\rho(g_i)x}^p\geqslant \sum_{s\in N_i}\norm{P_sx-P_s\rho(g_i)x}^p>\sum_{s\in N_i} \eps^p\norm{P_sx}^p\geqslant \frac{\eps^p}{k^p}\norm{x}^p,
$$
i.e., $\norm{x-\rho(g_i)x}>\frac \eps k \norm{x}$. Thus, no unit vector in $\ell_p(T'\times \Gamma)$ is $\big(\rho(g_1),\ldots, \rho(g_k);\frac \eps k\big)$-invariant.
\end{proof}

The operator $R$ below occurs frequently in work on uniqueness of translation invariant functionals, e.g., \cite{bourgain}.

\begin{lemma}\label{commutant}
For all $t\in T$ and $1<p\leqslant \infty$, every linear operator $S\colon \ell_p(T)\til \C^T$ in the commutant of $\lambda(G_t)$ maps $\ell_p(T\setminus \{t\})$ into $\C^{T\setminus \{t\}}$. It follows that $\ell_p({T\setminus \{t\})}$ is the unique $\lambda(G_t)$-invariant linear complement of $\C\1_t$ in $\ell_p(T)$.
\end{lemma}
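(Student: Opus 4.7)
The plan is to reduce the conclusion to the statement that the linear functional $\phi(x) := (Sx)(t)$ vanishes on $\ell_p(T\setminus\{t\})$. Since $S$ commutes with $\lambda(G_t)$ and $g^{-1}t = t$ for $g \in G_t$, we get
$$
\phi(\lambda(g)x) = (S\lambda(g)x)(t) = (\lambda(g)Sx)(t) = (Sx)(g^{-1}t) = \phi(x),
$$
so $\phi$ is $G_t$-invariant. Because $S$ is not assumed continuous, $\phi$ need not be continuous either, so we cannot regard it as an element of the topological dual. Instead, the plan is to exhibit for every $x \in \ell_p(T\setminus\{t\})$ a finite algebraic decomposition
$$
x = \sum_{i=1}^k (y_i - \lambda(g_i^{-1})y_i)
$$
with $g_i \in G_t$ and $y_i \in \ell_p(T\setminus\{t\})$; the $G_t$-invariance of $\phi$ will then yield $\phi(x) = 0$ immediately.

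To produce the decomposition, let $q$ be the exponent conjugate to $p$, so $1 \leq q < \infty$ because $p > 1$. Applying the preceding lemma to the exponent $q$ furnishes $g_1,\ldots,g_k \in G_t$ and $\eps > 0$ with
$$
\max_{1 \leq i \leq k} \norm{y - \lambda(g_i)y}_q \geq \eps \norm{y}_q
$$
for every $y \in \ell_q(T\setminus\{t\})$. Equivalently, the linear map $D\colon \ell_q(T\setminus\{t\}) \to \ell_q(T\setminus\{t\})^k$ given by $Dy = (y - \lambda(g_i)y)_i$ is bounded below, hence injective with closed range. By the closed range theorem its adjoint $D^*\colon \ell_p(T\setminus\{t\})^k \to \ell_p(T\setminus\{t\})$ is surjective. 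Since $\lambda(g)^* = \lambda(g^{-1})$ for the canonical $\ell_q$--$\ell_p$ pairing, $D^*$ is explicitly the map $(y_i) \mapsto \sum_i(y_i - \lambda(g_i^{-1})y_i)$, which delivers the required decomposition.

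For the uniqueness statement, let $V \subseteq \ell_p(T)$ be any $\lambda(G_t)$-invariant linear complement of $\C\1_t$ and let $P$ be the projection onto $\C\1_t$ along $V$. Then $P$ commutes with $\lambda(G_t)$, so by the first part $P$ sends $\ell_p(T\setminus\{t\})$ into $\C^{T\setminus\{t\}}$; as $P$ also takes values in $\C\1_t$, it must vanish on $\ell_p(T\setminus\{t\})$, giving $\ell_p(T\setminus\{t\}) \subseteq \ker P = V$, and a codimension count yields equality.

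The main subtlety is the absence of continuity for $S$, which forbids us from simply identifying $\phi$ with a bounded functional and invoking triviality of the $G_t$-invariant topological dual. This is precisely why the argument is carried out at the level of the bounded-below operator $D$ and converted to an algebraic coboundary decomposition via closed range duality. The scheme handles $1 < p \leq \infty$ uniformly, the case $p = \infty$ being covered by the $\ell_1$ instance of the preceding lemma.
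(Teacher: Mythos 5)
Your proof is correct and follows essentially the same route as the paper: the same use of the preceding lemma for the conjugate exponent $q$, the same bounded-below operator whose surjective adjoint yields the finite decomposition $x=\sum_i (y_i-\lambda(g_i^{\pm 1})y_i)$ in $\ell_p(T\setminus\{t\})$, the same evaluation-at-$t$ argument, and the same uniqueness step via the projection along an invariant complement. Your explicit care with $\lambda(g)^*=\lambda(g^{-1})$ is a minor refinement of the paper's notation, not a different argument.
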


\begin{proof}Let $1\leqslant q<\infty$ be the conjugate index of $p$. 
Fix $g_1,\ldots, g_k\in G_t$ and $\eps>0$ so that 
$$
\max_{1\leqslant i\leqslant k}\;\norm{x-\lambda(g_i)x}>\eps\norm x
$$
for any non-zero vector $x\in \ell_q(T\setminus \{t\})$. It follows that the operator 
$$
R\colon \ell_q\big(T\setminus \{t\}\big)\;\;\longrightarrow\;\;\underbrace{ \ell_q\big(T\setminus \{t\}\big)\oplus\ldots \oplus  \ell_q\big(T\setminus \{t\}\big)}_{k\text{ copies}}
$$
 defined by $Rx=\big(x-\lambda(g_1)x,\ldots, x-\lambda(g_k)x\big)$ is an isomorphism with a closed subspace and therefore the conjugate operator $R^*$ is surjective. Thus, every element $x\in \ell_p\big(T\setminus \{t\}\big)$ can be written as 
 $$
 x=\sum_{i=1}^k y_i-\lambda(g_i)y_i,
 $$
 for some $y_i\in \ell_p\big(T\setminus \{t\}\big)$.

In particular, if $S\colon \ell_p\big(T\setminus \{t\}\big)\til \C^T$ is any linear operator commuting with $\lambda(G_t)$, then
$$
\1^*_t(Sx)=\sum_{i=1}^k \1_t^*Sy_i-\1_t^*\lambda(g_i)Sy_i=\sum_{i=1}^k \1_t^*Sy_i-\1_t^*Sy_i=0,
$$
as $\1_t^*\lambda(g_i)=\1^*_t$. 
That is, $S$ maps $\ell_p(T\setminus \{t\})$ into $\C^{T\setminus \{t\}}$.

Thus, if $P\colon \ell_p(T)\til \C\1_t$ is a linear projection commuting with $\lambda(G_t)$, then  $\ell_p(T\setminus \{t\})\subseteq \ker P$, and, since $\ell_p(T\setminus \{t\})$ is also a linear complement of $\C\1_t$, it follows that $\ell_p(T\setminus \{t\})=\ker P$, whereby $P$ is the projection along the subspace $\ell_p(T\setminus \{t\})$.
\end{proof}
 
We can now obtain the following strengthening of Lemma \ref{inner product}.
 \begin{thm}\label{unique inner product}
 The usual inner product is, up to a scalar multiple, the unique  $\lambda\big({\rm Aut}(T)\big)$-invariant inner product on $\ell_2(T)$.
 \end{thm}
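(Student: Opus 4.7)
The strategy is to apply Theorem \ref{commutant intro} to a natural linear operator built out of $B$, and then extend the resulting identity by a completion and rigidity argument. Let $B$ be any $\lambda(\mathrm{Aut}(T))$-invariant inner product on $\ell_2(T)$, with the convention that $B$ is linear in its first argument and conjugate-linear in its second. Define $S\colon \ell_2(T)\til \C^T$ by $S(x)(s)=B(x,\1_s)$. Then $S$ is linear in $x$, and using $\lambda(g)\1_s=\1_{gs}$ together with the $\lambda(\mathrm{Aut}(T))$-invariance of $B$ one computes
$$
S(\lambda(g)x)(s)=B(\lambda(g)x,\1_s)=B(x,\1_{g^{-1}s})=S(x)(g^{-1}s)=(\lambda(g)S(x))(s),
$$
so $S$ lies in the commutant of $\lambda(\mathrm{Aut}(T))$. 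Theorem \ref{commutant intro} (with $p=2$) therefore forces $S=c\Id$ for some $c\in\C$. Hence $B(x,\1_s)=cx(s)$ for all $x\in\ell_2(T),s\in T$, and setting $x=\1_s$ with positive-definiteness gives $c=B(\1_s,\1_s)>0$, in particular real.

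The Hermitian property then gives $B(\1_s,y)=\overline{cy(s)}=c\,\overline{y(s)}$, and expanding arbitrary elements of $c_{00}(T)$ in the basis $\{\1_s\}$ by sesquilinearity shows $B(x,y)=c\langle x|y\rangle$ whenever $x\in c_{00}(T)$ or $y\in c_{00}(T)$. In particular the family $\{c^{-1/2}\1_s\}_{s\in T}$ is $B$-orthonormal, so Bessel's inequality yields the lower bound $B(y,y)\geqslant c\norm y_2^2$ for every $y\in\ell_2(T)$.

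It remains to upgrade the identity to arbitrary pairs in $\ell_2(T)\times\ell_2(T)$, equivalently (by polarisation) to show $B(y,y)=c\norm y_2^2$ for every $y$. Given $y\in\ell_2(T)$, the partial sums $y_n=\sum_{s\in F_n}y(s)\1_s\in c_{00}(T)$ satisfy $B(y_n-y_m,y_n-y_m)=c\norm{y_n-y_m}_2^2$, so they form a $B$-Cauchy sequence converging in the Hilbert completion $H_B$ of $(\ell_2(T),B)$ to some $y'\in H_B^0$, the $B$-closed span of $\{\1_s\}$. Using that $B(\1_s,\cdot)$ extends to a continuous functional on $H_B$, one verifies $B(\1_s,y-y')=0$ for every $s$, i.e.\ $y-y'\in (H_B^0)^{\perp}$, and once $y=y'$ is established the formula $B(y,y)=\lim B(y_n,y_n)=c\norm y_2^2$ follows.

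The main obstacle is therefore showing $(H_B^0)^\perp=\{0\}$ in $H_B$, which is where Theorem \ref{commutant intro} must be deployed a second time. The unitary $G$-action on $H_B^0$ is equivalent to the standard representation on $\ell_2(T)$, and the linear map $y\mapsto y-Py\colon\ell_2(T)\til (H_B^0)^{\perp}$ (where $P$ is the $B$-orthogonal projection onto $H_B^0$) is $\lambda$-equivariant. A non-trivial $(H_B^0)^\perp$ would produce a non-zero $\lambda$-equivariant linear map $\ell_2(T)\til\C^T$ whose composition with a suitable coordinate functional is not a scalar multiple of the identity, contradicting Theorem \ref{commutant intro}. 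With $H_B^0=H_B$ in hand, $y=y'$ follows and the proof concludes by polarisation.
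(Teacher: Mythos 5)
Your first half is fine and in fact streamlines the paper's argument: defining $S(x)(s)=B(x,\1_s)$ and applying Theorem \ref{commutant intro} once immediately gives $B(x,\1_s)=c\,x(s)$ for all $x\in\ell_2(T)$, which is what the paper extracts more laboriously from Lemma \ref{commutant} (uniqueness of the $\lambda(G_t)$-invariant complement of $\C\1_t$) plus transitivity. The Bessel bound and the reduction of the theorem to the statement that $c_{00}(T)$ is $B$-dense in $\ell_2(T)$ (equivalently $(H_B^0)^\perp=\{0\}$ in your completion $H_B$) are also correct. But the last step, which you rightly identify as the main obstacle, is not actually proved. You assert that a non-trivial $(H_B^0)^\perp$ yields a non-zero $\lambda$-equivariant linear map $\ell_2(T)\to\C^T$ via ``a suitable coordinate functional'', contradicting Theorem \ref{commutant intro}; no such functional is exhibited, and the obvious candidates fail. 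Any equivariant map into $\C^T$ you build by composing $Q=\mathrm{Id}-P\colon\ell_2(T)\to(H_B^0)^\perp$ with coordinates on $(H_B^0)^\perp$ annihilates $c_{00}(T)$ (since $Q\1_t=0$), hence by Theorem \ref{commutant intro} it is $\alpha\,\mathrm{Id}$ with $\alpha=0$ — i.e.\ the zero map, which is a scalar multiple of the identity and produces no contradiction. In particular, pairing against the vectors $\1_t$ gives identically zero coordinates because $\1_t\in H_B^0$. The most this line of reasoning yields is that $(H_B^0)^\perp$ admits no non-zero equivariant ``coordinate system'' indexed by $T$ (e.g.\ no non-zero vectors fixed by a vertex stabiliser ${\rm Aut}(T)_e$), which is far weaker than $(H_B^0)^\perp=\{0\}$: a priori $(H_B^0)^\perp$ could be an arbitrary unitary representation of the discrete group ${\rm Aut}(T)$ bearing no relation to $\C^T$, and Theorem \ref{commutant intro} says nothing about such a representation.

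So the genuine content of the theorem — ruling out an invariant positive form living ``beyond'' $c_{00}(T)$ — is exactly what your argument leaves open. For comparison, the paper does not pass to the abstract completion at all: after establishing agreement on $c_{00}(T)$ it works inside $\ell_2(T)$, computes $c_{00}(T)^{\perp'}=\bigcap_{t\in T}\ell_2(T\setminus\{t\})=\{0\}$, concludes $\norm\cdot'$-density of $c_{00}(T)$ from this, and finishes with Parseval and Cauchy--Schwarz. If you want to salvage your route, you must supply an argument for the density of $c_{00}(T)$ in the $B$-norm that uses more than the commutant theorem applied to maps into $\C^T$; as it stands, that step is missing.
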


\begin{proof}
Note that, if $\langle\cdot|\cdot\rangle'$ is a $\lambda(G)$-invariant inner product on $\ell_2(T)$, then, for every $t\in T$, the orthogonal complement $(\C\1_t)^{\perp'}$ of $\C\1_t$ with respect to $\langle\cdot|\cdot\rangle'$ is a $\lambda(G_t)$-invariant linear complement. So, by Lemma \ref{commutant},  we have $(\C\1_t)^{\perp'}=\ell_2(T\setminus \{t\})$ and, in particular, $\langle \1_s| \1_t\rangle'=0$ for all $s\neq t$ in $T$, whereby $\{\1_t\}_{t\in T}$ is an $\langle\cdot|\cdot\rangle'$-orthogonal sequence. Since $G$ acts transitively on $T$, we also see that $\langle\1_s|\1_s\rangle'=\langle\1_t|\1_t\rangle'>0$ for all $s,t\in T$ and hence, by multiplying by a positive scalar, we may suppose that $\{\1_t\}_{t\in T}$ is actually orthonormal with respect to $\langle\cdot|\cdot\rangle'$, whence the usual inner product agrees with $\langle\cdot|\cdot\rangle'$   on $c_{00}(T)$ .

Observe now that 
$$
c_{00}(T)^{\perp'}=\bigcap_{t\in T}\ell_2(T\setminus \{t\})=\{0\},
$$
showing that $c_{00}(T)$ is $\norm\cdot'$-dense in $\ell_2(T)$, where $\norm\cdot'$ is the norm induced by $\langle\cdot|\cdot\rangle'$. It then follows from Parseval's Equality applied to each of the inner products that any $x\in \ell_2(T)$ may be simultaneously approximated in the two norms by an element of $c_{00}(T)$, which, by Cauchy--Schwarz, implies that the two inner products agree on $\ell_2(T)$.
\end{proof}

\begin{thm}\label{trivial commutant}
The commutant of $\lambda\big({\rm Aut}(T)\big)$ in the space of linear operators from $\ell_p(T)$ to $\C^T$, $1<p\leqslant \infty$, is just $\C\!\cdot\! \Id$.
\end{thm}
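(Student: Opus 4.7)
My plan is to use Lemma \ref{commutant} repeatedly to pin down the action of $S$ on Dirac functions, then bootstrap to all of $\ell_p(T)$ by a simple linearity argument.

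First, fix $s\in T$ and observe that since $S$ commutes with every element of $\lambda(\mathrm{Aut}(T))$ it in particular commutes with $\lambda(G_t)$ for every vertex $t$. Lemma \ref{commutant} therefore tells us that, for every $t\in T$, one has $S[\ell_p(T\setminus\{t\})]\subseteq\C^{T\setminus\{t\}}$. Applied to all $t\neq s$, and noting $\1_s\in\ell_p(T\setminus\{t\})$ for those $t$, this forces $(S\1_s)(t)=0$ for every $t\neq s$, i.e.\ $S\1_s=c_s\1_s$ for some scalar $c_s\in\C$.

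Next I would use transitivity of $\mathrm{Aut}(T)$ on $T$ to show $c_s$ does not depend on $s$. Given $s,s'\in T$ pick $g\in\mathrm{Aut}(T)$ with $g(s)=s'$; then $\lambda(g)\1_s=\1_{s'}$ and, using $S\lambda(g)=\lambda(g)S$,
\[
c_{s'}\1_{s'}=S\1_{s'}=S\lambda(g)\1_s=\lambda(g)S\1_s=c_s\lambda(g)\1_s=c_s\1_{s'},
\]
so $c_{s'}=c_s=:\vartheta$. Hence $S\1_s=\vartheta\1_s$ for every $s\in T$.

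Finally, I would extend this to all of $\ell_p(T)$ using only linearity, without any continuity assumption (which is crucial because $S$ is not assumed to be bounded). Given $x\in\ell_p(T)$ and $s\in T$, decompose $x=x(s)\1_s+y$ with $y:=x-x(s)\1_s\in\ell_p(T\setminus\{s\})$. By linearity and the previous step,
\[
Sx=x(s)\,S\1_s+Sy=\vartheta\,x(s)\1_s+Sy,
\]
and since $Sy\in\C^{T\setminus\{s\}}$ by Lemma \ref{commutant}, evaluating at $s$ gives $(Sx)(s)=\vartheta\,x(s)$. As $s\in T$ was arbitrary, $Sx=\vartheta\,x$, so $S=\vartheta\cdot\Id$. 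The only ingredient that is not essentially immediate is the observation that the lemma lets one compute $(Sx)(s)$ pointwise from a finite-support decomposition, which neatly bypasses the fact that $c_{00}(T)$ need not be dense in $\ell_\infty(T)$ and that $S$ need not be continuous—this is where I expect a reader might expect an obstacle but where Lemma \ref{commutant} does all the work.
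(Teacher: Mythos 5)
Your proposal is correct and follows essentially the same route as the paper: Lemma \ref{commutant} pins down $S\1_s$ as a multiple of $\1_s$, transitivity of ${\rm Aut}(T)$ makes the scalar uniform, and the pointwise evaluation of $Sx$ at $s$ via the decomposition $x=x(s)\1_s+y$ with $y\in\ell_p(T\setminus\{s\})$ finishes the argument, with no continuity of $S$ needed anywhere. The only cosmetic difference is that the paper phrases the first step through the intersections $\ell_p(A)=\bigcap_{t\notin A}\ell_p(T\setminus\{t\})$, which is the same observation you make vertex by vertex.
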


\begin{proof}
Note that if $S$ belongs to the commutant, then, by Lemma \ref{commutant}, $S$ maps $\ell_p(T\setminus \{t\})$ into $\C^{T\setminus \{t\}}$ for every $t\in T$ and hence maps $\ell_p(A)=\bigcap_{t\notin A}\ell_p(T\setminus \{t\})$ into $\C^A=\bigcap_{t\notin A}\C^{T\setminus \{t\}}$ for all subsets $A\subseteq T$. So fix $t\in T$ and write $S\1_t=\alpha \1_t$ for some $\alpha\in \C$. Then, for any $g\in G$, we have
$$
S\1_{g(t)}=S\lambda(g)\1_t=\lambda(g)S\1_t=\alpha\lambda(g)\1_t=\alpha\1_{g(t)}.
$$
As $G$ acts transitively on $T$, it follows that $S\1_s=\alpha\1_s$ for all $s\in T$. Now, suppose that $x\in \ell_p(T)$ and $s\in T$ and write $x=y+\xi\1_s$, with $y\in \ell_p(T\setminus \{s\})$ and $\xi\in \C$. It then follows that
$$
\1_s^*(Sx)= \1_s^*(Sy)+\1_s^*\big(S(\xi \1_s)\big)=\alpha\xi,
$$
showing that $Sx=\alpha x$. Thus $S=\alpha\!\cdot\!\Id$.
\end{proof}

Let us also observe that Theorem \ref{trivial commutant} fails for $p=1$. Indeed, if we define $N\colon \ell_1(T)\til \ell_\infty(T)$ by $N(\1_s)=\sum_{t\in \ku N_s}\1_t$, where $\ku N_t$ is the set of neighbours of $s$ in $T$, then $N$ clearly commutes with every $\lambda(g)$, $g\in {\rm Aut}(T)$.

Theorems \ref{unique inner product} and \ref{trivial commutant} show strong rigidity properties of the representation $\lambda\colon {\rm Aut}(T)\til \ku U(\ell_2(T))$. In this conncetion, it is natural to ask whether, apart from determining the inner product, it also determines the norm on $\ell_2(T)$. Indeed, suppose $\lambda({\rm Aut}(T))\leqslant K\leqslant  GL(\ell_2(T))$ is a bounded subgroup. Then, by Proposition 2.3  \cite{furman}, there is  an equivalent $K$-invariant norm $\triple\cdot$ on $\ell_2(T)$ that is uniformly convex and uniformly smooth. Moreover, for any finite subtree $A\subseteq T$, the space $\ell_2(T)^{\lambda(G_A)}$ of $\lambda(G_A)$-invariant vectors is just $\ell_2(A)$. So, by the Alaoglu--Birkhoff Theorem (see, e.g., Thm 4.10 \cite{duke}), there is a projection $P_A$ of $\ell_2(T)$ onto the subspace $\ell_2(A)$ with $\triple{P_A}=1$. Furthermore, this must be the usual orthogonal projection  since it commutes with $\lambda(G_A)$. Note also that the same holds in the dual. Finally, by approximating by finite subtrees and passing to a wot-limit, one observes that $\triple{P_A}=1$ for all non-empty subtrees $A\subseteq T$. This puts serious restrictions on the norm $\triple\cdot$ and thus also on $K$.

\begin{prob}
Is every bounded subgroup $\lambda({\rm Aut}(T))\leqslant K\leqslant  GL(\ell_2(T))$  contained in $\ku U(\ell_2(T))$?
\end{prob}

Observe first that this is equivalent to asking whether every such $K$ is unitarisable, since then the $K$-invariant inner product must be the usual one and hence $K\leqslant \ku U(\ell_2(T))$.


\section{A derivation associated to ${\rm Aut}(T)$}
In the following, we shall study a well-known derivation giving rise to a non-unitarisable representation of $\F_\infty$ (see also \cite{pisier, ozawa} for different presentations). For this, we introduce a bounded linear operator
$$
L\colon \ell_1(T)\til \ell_1(T),
$$
where $T$ is the $\aleph_0$-regular tree as in Section \ref{Aut(T)}. We begin by fixing a root $e\in T$ and let $\hat{\cdot}\,\colon T\til T$ be the map defined by $\hat e=e$ and $\hat s=s_{n-1}$, whenever $s\neq e$ and $s_0, s_1, s_2,\ldots, s_{n-1}, s_n$ is the geodesic from $s_0=e$ to $s_n=s$. Also, for any $s\in T$, let $\ku N_s$ denote the set of neighbours of $s$ in $T$.

We then let $L\colon \ell_1(T)\til \ell_1(T)$  be the unique bounded linear operator satisfying
$$
L(\1_s)=\1_{\hat s}
$$
for $s\neq e$ and 
$$
L(\1_e)=0.
$$
Observe then that the adjoint operator $L^*\colon \ell_\infty(T)\til \ell_\infty(T)$ satisfies
$$
L^*(\1_s)=\Big(\sum_{t\in \ku N_s}\1_t\Big)-\1_{\hat s}=\sum_{\hat t=s}\1_t
$$
for $s\neq e$ and 
$$
L^*(\1_e)=\sum_{t\in \ku N_e}\1_t.
$$ 

In other words, if $N\colon \ell_1(T)\til \ell_\infty(T)$ is the bounded operator defined following Theorem \ref{trivial commutant} by $N(\1_s)=\sum_{t\in \ku N_s}\1_t$, then $L^*+L=N$, which commutes with every $\lambda(g)$, $g\in {\rm Aut}(T)$. From this it follows that, for every $g\in {\rm Aut}(T)$,
$$
\lambda(g)L-L\lambda(g)=L^*\lambda(g)-\lambda(g)L^*
$$ 
as operators on $\ell_1(T)$. We may hence conclude that
$$
d(g)=L^*\lambda(g)-\lambda(g)L^*
$$
defines an operator on $\ell_\infty(T)$ of norm at most $2$, which restricts to an operator on $\ell_1(T)$ of norm at most $2$ and therefore, by the Riesz-Thorin interpolation Theorem, that $d(g)$ restricts to an operator on $\ell_2(T)$ of norm at most $2$. As evidently
\[\begin{split}
d(gf)
&=L^*\lambda(gf)-\lambda(gf)L^*\\
&=(L^*\lambda(g)-\lambda(g)L^*)\lambda(f)+\lambda(g)(L^*\lambda(f)-\lambda(f)L^*)\\
&=d(g)\lambda(f)+\lambda(g)d(f),
\end{split}\]
we see that $d\colon {\rm Aut}(T)\til \ku B(\ell_2(T))$ is a bounded derivation.

Now, if one identifies $T$ with the Cayley graph of $\F_\infty$, it is known that even the restriction of $d$ to $\F_\infty$ viewed as translations of $T$ is non-inner (see, e.g., \cite{pisier,ozawa}). However, allowing for all of ${\rm Aut}(T)$, we see that not only is $d$ not defined by an element of $\ku B(\ell_2(T))$, but $L^*$ is essentially the only linear operator from $\ell_2(T)$ to $\C^T$ defining $d$.

\begin{thm}\label{L* perturbation}
Suppose $A\colon \ell_2(T)\til \C^T$ is a globally defined linear operator so that $d(g)=A\lambda(g)-\lambda(g)A$ for all $g\in {\rm Aut}(T)$. Then  $A=L^*+\vartheta\Id$ for some $\vartheta\in \C$. 
\end{thm}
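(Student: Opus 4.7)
The plan is to reduce the statement to a direct application of Theorem~\ref{trivial commutant}.

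First I would verify that $A - L^*$ is a well-defined linear operator from $\ell_2(T)$ to $\C^T$. Since $L^*$ was introduced as an operator on $\ell_\infty(T)$ and $\ell_2(T)\subseteq \ell_\infty(T)$, the restriction $L^*|_{\ell_2(T)}$ makes sense as a linear map into $\ell_\infty(T)\subseteq \C^T$, regardless of whether it is bounded into $\ell_2(T)$. Thus $B := A - L^*$ is a genuine linear map $\ell_2(T)\to \C^T$.

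Next, from the hypothesis on $A$ and the very definition $d(g) = L^*\lambda(g) - \lambda(g)L^*$, I would subtract to obtain
\[
A\lambda(g) - \lambda(g)A = L^*\lambda(g) - \lambda(g)L^*,
\]
which rearranges to
\[
B\lambda(g) = \lambda(g)B \qquad \text{for every } g\in {\rm Aut}(T).
\]
So $B$ lies in the commutant of $\lambda\big({\rm Aut}(T)\big)$ inside the space of linear operators $\ell_2(T)\to \C^T$.

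Now I would invoke Theorem~\ref{trivial commutant} at $p=2$, which says precisely that this commutant is $\C\cdot\Id$. Hence $B = \vartheta\,\Id$ for some $\vartheta\in\C$, and therefore $A = L^* + \vartheta\,\Id$, as claimed.

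There is no real obstacle here; the work has already been done in establishing the rigidity of the commutant. The only subtlety worth emphasising in the writeup is that $A$ and $L^*$ need not individually be bounded into $\ell_2(T)$: their difference is a linear map into $\C^T$, and that is the generality in which the commutant theorem was proved, so the conclusion applies without further boundedness hypotheses.
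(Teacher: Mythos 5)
Your proof is correct and is essentially identical to the paper's: both subtract the defining identity $d(g)=L^*\lambda(g)-\lambda(g)L^*$ from the hypothesis to see that $A-L^*$ commutes with every $\lambda(g)$ as a linear map $\ell_2(T)\til\C^T$, and then apply Theorem~\ref{trivial commutant} with $p=2$. Your extra remark that $A$ and $L^*$ need not be bounded into $\ell_2(T)$, only linear into $\C^T$, is a fair clarification but does not change the argument.
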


\begin{proof}
Assume that $A\colon \ell_2(T)\til \C^T$ is as above. Then, for all $g\in {\rm Aut}(T)$,
$$
A\lambda(g)-\lambda(g)A=d(g)=L^*\lambda(g)-\lambda(g)L^*,
$$
i.e., $\big(A-L^*\big)\lambda(g)=\lambda(g)\big(A-L^*\big)$.
By Theorem \ref{trivial commutant}, it follows that $A-L^*=\vartheta{\rm Id}$ for some $\vartheta\in \C$ and our theorem follows.
\end{proof}

Thus, to see that $d$ is not inner or even that $d$ cannot be written as $d(g)=A\lambda(g)-\lambda(g)A$  with $A\colon \ell_2(T)\til \ell_2(T)$ a gobally defined linear operator, note that, in this case,  $A=L^*+\vartheta\Id$ for some $\vartheta$, whereby $L^*$ would have to map $\ell_2(T)$ into $\ell_2(T)$, which it does not.

However, even though the derivation $d\colon {\rm Aut}(T)\til \ku B(\ell_2(T))$ is not inner, by Lemma \ref{repofderivation}, we see that there is a continuous homogeneous map $\psi\colon \ell_2(T)\til \ell_2(T)$ so that
$$
d(g)=\lambda(g)\psi-\psi\lambda(g)
$$
for all $g\in {\rm Aut}(T)$.

In the following, we combine the results above with the analysis of Sections \ref{bounded} and  \ref{hilbert case}. So, to simplify notation, we let $\ku H_1$ and $\ku H_2$ denote two distinct copies of $\ell_2(T)$. Now, suppose that $G\leqslant {\rm GL}(\ku H_1\oplus \ku H_2)$ is a bounded subgroup leaving $\ku H_1$ invariant and containing $\lambda_d[{\rm Aut}(T)]$, i.e., containing the block matrices
$$
\begin{pmatrix} \lambda(g) & d(g) \\ 0 & \lambda(g) \end{pmatrix}
=\begin{pmatrix} \lambda(g) & L^*\lambda(g)-\lambda(g)L^* \\ 0 & \lambda(g)\end{pmatrix},
$$
for all $g\in {\rm Aut}(T)$. As we have seen in Section \ref{bounded}, there is a partial map
$$
\delta\colon {\rm GL}(\ku H_1)\times {\rm GL}(\ku H_2)\til \ku B(\ku H_2, \ku H_1)
$$
so that every element of $G$ is of the form
$$
\begin{pmatrix} u & \delta(u,v) \\ 0 & v \end{pmatrix}
$$
for some $u\in {\rm GL}(\ku H_1)$ and $v\in {\rm GL}(\ku H_2)$. Also, by Lemma \ref{repofderivation}, there is a continuous homogeneous map $\psi\colon \ku H_2\til \ku H_1$ so that
$$
\delta(u,v)=u\psi-\psi v
$$
for all $u,v$. 

Therefore, by the expressions for $d(g)$, we see that
$$
\lambda(g)\big(L^*+\psi\big)=\big(L^*+\psi\big)\lambda(g),
$$
when $L^*$ and $\psi$ are viewed as continuous maps $\ell_2(T)\til \ell_\infty(T)$, while
$$
\lambda(g)\big(L-\psi\big)=\big(L-\psi\big)\lambda(g),
$$
when $L$ and $\psi$ are viewed as continuous maps $\ell_1(T)\til \ell_2(T)$. In other words, $L^*+\psi$ and $L-\psi$ commute with $\lambda(g)$ for $g\in {\rm Aut}(T)$.

Now, for every subset $S\subseteq T$, let ${\rm Aut}(T)_S=\{g\in {\rm Aut}(T)\del g(t)=t, \; \a t\in S\}$ 
denote the pointwise stabiliser and note that $\lambda({\rm Aut}(T)_S)$ acts trivially on  $\ell_1(S)$. Since $L-\psi$ commutes with the $\lambda(g)$, we see that, for any $g\in {\rm Aut}(T)_S$ and $x\in \ell_1(S)$,
$$
(L-\psi) x=(L-\psi)\lambda(g)x=\lambda(g)(L-\psi)x,
$$
which means that $(L-\psi)x\in \ell_2(T)^{\lambda({\rm Aut}(T)_S)}$, where the latter denotes the subspace of $\lambda({\rm Aut}(T)_S)$-invariant vectors in $\ell_2(T)$. But, if $S$ is a finite subtree, then 
$$
\ell_2(T)^{\lambda({\rm Aut}(T)_S)}=\ell_2(S),
$$ 
showing that $L-\psi$ maps $\ell_1(S)$ into $\ell_2(S)$.  Approximating arbitrary subtrees  by finite subtrees and extending by continuity, we conclude that $L-\psi$ maps $\ell_1(S)$ into $\ell_2(S)$ for all subtrees $S\subseteq T$. However, $L$ maps $\ell_1(S)$ into $\ell_1(S\cup \hat S)$, which shows that $\psi$ maps $\ell_1(S)$ into $\ell_2(S\cup \hat S)$. More precisely, assuming that $e\notin S$, if $s\in S$ denotes the vertex closest to $e$, we have, for all $x\in \ell_1(S)$,
$$
\1_{\hat s}^*(Lx)=\1_s(x),
$$
and so $\1_{\hat s}^*(\psi x)=\1_s(x)$.

Observe also that the continuous homogenous map $\Delta\colon \ku H_2\times \ku H_2\til \ku H_1$ given by 
$$
\Delta(x,y)=\psi(x)+\psi(y)-\psi(x+y)
$$
satisfies $\Delta(x,y)=(\psi-L)x+(\psi-L)y-(\psi-L)(x+y)$ for $x,y\in \ell_1(T)$, as $L$ is linear. Therefore, for any subtree $S\subseteq T$, $\Delta$ maps $\ell_1(S)\times \ell_1(S)$ into $\ell_2(S)$ and hence, by density of $\ell_1(S)$ in $\ell_2(S)$, 
$$
\Delta\colon \ell_2(S)\times \ell_2(S)\til \ell_2(S).
$$
Also, as $\delta(u,v)=u\psi-\psi v$ is linear, one readily verifies that $\Delta(vx,vy)=u\Delta(x,y)$ for all $x,y$.

Finally, since by Proposition \ref{irred} the unitary representation $\lambda\colon {\rm Aut}(T)\til \ku U(\ell_2(T))$ is irreducible, it follows from Theorem \ref{hilbert} that the maps
$$
\begin{pmatrix} u & \delta(u,v) \\ 0 & v \end{pmatrix}\mapsto u
$$
and 
$$
\begin{pmatrix} u & \delta(u,v) \\ 0 & v \end{pmatrix}\mapsto v
$$
are $\tt{sot}$-isomorphisms between $G$ and the respective images in ${\rm GL}(\ku H_1)$ and ${\rm GL}(\ku H_2)$.

We sum up the above discussion in the following theorem.

\begin{thm}\label{structure}
Suppose that $G\leqslant {\rm GL}(\ell_2(T)\oplus\ell_2(T))$ is a bounded subgroup leaving  the first copy of $\ell_2(T)$ invariant and containing $\lambda_d[{\rm Aut}(T)]$.

Then there is a continuous homogeneous map $\psi\colon \ell_2(T)\til \ell_2(T)$ for which
$$
L^*+\psi\colon \ell_2(T)\til \ell_\infty(T)\quad \text{and}\quad  L-\psi\colon \ell_1(T)\til \ell_2(T)
$$
commute with $\lambda(g)$ for $g\in {\rm Aut}(T)$ and so that every element of $G$ is of the form
$$
\begin{pmatrix} u & u\psi-\psi v \\ 0 & v \end{pmatrix}
$$
for some $u,v\in {\rm GL}(\ell_2(T))$. 

Finally, the mappings 
$$
\begin{pmatrix} u & u\psi-\psi v \\ 0 & v \end{pmatrix}\mapsto u
\qquad \text{and}\qquad
\begin{pmatrix} u &u\psi-\psi v \\ 0 & v \end{pmatrix}\mapsto v
$$
are $\tt{sot}$-isomorphisms between $G$ and their respective images in ${\rm GL}(\ell_2(T))$.
\end{thm}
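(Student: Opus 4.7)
The plan is to assemble the theorem from three ingredients already developed earlier: the block-matrix form of elements of $G$ coming from Lemma \ref{repofderivation}, the explicit identity $d(g) = L^*\lambda(g) - \lambda(g)L^*$ defining the derivation under study, and the rigidity statement Theorem \ref{hilbert}, whose hypotheses we can verify from Proposition \ref{irred} (irreducibility of $\lambda$) together with the non-innerness of $d$ established after Theorem \ref{L* perturbation}.

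First I would apply Lemma \ref{repofderivation} with $Y = \ku H_1$ and $Z = \ku H_2$ to the bounded subgroup $G \leqslant {\rm GL}(\ku H_1 \oplus \ku H_2)$ that leaves $\ku H_1$ invariant. This directly furnishes the continuous homogeneous map $\psi \colon \ell_2(T) \til \ell_2(T)$ such that every element of $G$ takes the form
$$
\begin{pmatrix} u & u\psi - \psi v \\ 0 & v \end{pmatrix},
\qquad u,v \in {\rm GL}(\ell_2(T)),
$$
as the off-diagonal map $\delta(u,v)$ must equal $u\psi - \psi v$.

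Next, specialising this to the matrix $\lambda_d(g) \in G$, whose diagonal entries are both $\lambda(g)$ and whose off-diagonal entry is $d(g)$, uniqueness of $\delta$ forces $d(g) = \lambda(g)\psi - \psi \lambda(g)$. Combined with the defining identity $d(g) = L^*\lambda(g) - \lambda(g)L^*$, viewed as maps $\ell_2(T) \til \ell_\infty(T)$ (with $\psi$ composed with the inclusion $\ell_2(T) \hookrightarrow \ell_\infty(T)$), subtraction yields
$$
(L^* + \psi)\lambda(g) = \lambda(g)(L^* + \psi).
$$
On the dense subspace $\ell_1(T)$, the same derivation identity reads $d(g) = \lambda(g)L - L\lambda(g)$, and noting that $L$ maps $\ell_1(T)$ into $\ell_1(T) \subseteq \ell_2(T)$ while $\psi|_{\ell_1(T)}$ takes values in $\ell_2(T)$, the analogous subtraction gives $(L - \psi)\lambda(g) = \lambda(g)(L - \psi)$ as maps $\ell_1(T) \til \ell_2(T)$. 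The only delicate point here is the domain and codomain bookkeeping for $L^* + \psi$ and $L - \psi$, which I expect to be the main (though mild) obstacle, since $L$ and $L^*$ naturally live on different $\ell_p$ scales.

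Finally, to obtain the $\mathtt{sot}$-isomorphism assertion, I would invoke Theorem \ref{hilbert} directly. Its two hypotheses hold for our setting: $\lambda \colon {\rm Aut}(T) \til \ku U(\ell_2(T))$ is irreducible by Proposition \ref{irred}, and $d$ is non-inner by the argument following Theorem \ref{L* perturbation} (any bounded linear operator $A$ on $\ell_2(T)$ defining $d$ would have to satisfy $A = L^* + \vartheta\,\Id$, but $L^*$ does not map $\ell_2(T)$ into $\ell_2(T)$). Theorem \ref{hilbert} then applies, giving the desired $\mathtt{sot}$-isomorphisms between $G$ and its images under the two projections onto ${\rm GL}(\ell_2(T))$, completing the proof.
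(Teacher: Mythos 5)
Your proposal is correct and takes essentially the same route as the paper: Lemma \ref{repofderivation} supplies $\psi$ and the block form of elements of $G$, comparing $\delta(\lambda(g),\lambda(g))=\lambda(g)\psi-\psi\lambda(g)$ with the two expressions $d(g)=L^*\lambda(g)-\lambda(g)L^*$ on $\ell_2(T)\til\ell_\infty(T)$ and $d(g)=\lambda(g)L-L\lambda(g)$ on $\ell_1(T)\til\ell_2(T)$ gives the commutation relations, and Theorem \ref{hilbert} (with Proposition \ref{irred} and the non-innerness of $d$ established after Theorem \ref{L* perturbation}) yields the $\mathtt{sot}$-isomorphisms. No gaps.
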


Using the information given by Theorem \ref{structure} and its proof, one may compute some simple values of the function $\psi$ associated to a bounded subgroup $G$.

\begin{exa}Since $\psi-L$ maps $\ell_1(\{e\})=\C\1_e$ into $\ell_2(\{e\})=\C\1_e$ and $L\1_e=0$, we must have $\psi(\1_e)=\mu \1_e$ for some $\mu \in \C$. Thus, for any other vertex $s\in T\setminus \{e\}$, write $s=g(e)$ for some $g\in {\rm Aut}(T)$, whereby
$$
\psi(\1_s)=L\1_s-(L-\psi)\lambda(g)\1_e=\1_{\hat s}-\lambda(g)(L-\psi)\1_e=\1_{\hat s}-\mu \1_s.
$$
\end{exa}

\begin{exa}
Suppose that $s\neq e$ and $\hat s=e$. Then $(\psi-L)(\1_s+\1_e)=\mu\1_s+\nu\1_e$ for some $\mu,\nu\in \C$, whence $\psi(\1_s+\1_e)=\mu\1_s+(1+\nu)\1_e$ with $\mu, \nu$ independent of $s$. Again, for any pair $t$ and $\hat t$ of neighbouring vertices in $T\setminus \{e\}$, there is a $g\in {\rm Aut}(T)$  so that $g(s)=t$ and $g(e)=\hat t$, whereby
\[\begin{split}
\psi(\1_{t}+\1_{\hat t})=(\psi-L)(\1_{t}+\1_{\hat t})+L(\1_{t}+\1_{\hat t})=\mu\1_{t}+(1+\nu)\1_{\hat t}+\1_{\hat{\hat t}}.
\end{split}\]
\end{exa}

\begin{exa}
Consider now the special case when the map $\delta$ is defined by $\delta(u,v)=Au-vA$ for some globally defined linear operator $A\colon \ell_2(T)\til \C^T$ (note that this requires the $v$ to be defined from $A[\ell_2(T)]$ into $\C^T$). Then, by Theorem \ref{L* perturbation}, we have that $A=L^*+\vartheta\Id$ for some $\vartheta\in\C$, i.e., 
$$
\delta(u,v)=L^*u-vL^*+\vartheta(u-v).
$$
Moreover, as
$$
\begin{pmatrix}\Id  & \vartheta \Id  \\  0&  \Id\end{pmatrix}
\begin{pmatrix}u  & L^*u-vL^*+\vartheta(u-v) \\0  &  v\end{pmatrix}
\begin{pmatrix}\Id  & -\vartheta \Id  \\ 0 &  \Id\end{pmatrix}
=\begin{pmatrix}u  & L^*u-vL^* \\ 0 &  v\end{pmatrix},
$$
we see that by conjugating $G$ by the bounded operator $\begin{pmatrix}\Id  & \vartheta \Id  \\ 0 &  \Id\end{pmatrix}$, we obtain another bounded subgroup $G'$ with a corresponding map $\delta'(u,v)=L^*u-vL^*$.
\end{exa}

\end{document}